\newtheorem{theorem}{Theorem}[section]
\newtheorem{proposition}[theorem]{Proposition}
\newtheorem{lemma}[theorem]{Lemma}
\newtheorem{corollary}[theorem]{Corollary}
\newtheorem{remark}{Remark}[section]
\DeclareMathOperator*{\argmin}{arg\,min}
\newcommand{\be}{\boldsymbol{\varepsilon}}
\newcommand{\E}{\mathbb{E}}
\newcommand{\s}{{\mathcal{S}}}
\newcommand{\R}{\mathbb{R}}
\newcommand{\D}{\mathbb{D}}
\newcommand{\G}{{\mathbb{G}}}
\newcommand{\F}{{\mathbb F}}
\renewcommand{\P}{{\mathbb P}}
\newcommand{\B}{\mathbb{B}}
\newcommand{\M}{\mathcal{M}}
\newcommand{\C}{\mathcal{C}}
\def \be{\begin{equs}}
\def \ee{\end{equs}}
\def \fhat {\hat{f_n}}
\begin{document}

\begin{frontmatter}
\title{On Efficiency of the Plug-in Principle for Estimating  Smooth Integrated Functionals of a Nonincreasing Density}
\runtitle{On Efficiency of the Grenander based Plug-in Estimator}
%\thankstext{T1}{Footnote to the title with the `thankstext' command.}

\begin{aug}
\author{\fnms{Rajarshi} \snm{Mukherjee}%\thanksref{t1}
\ead[label=e1]{rmukherj@hsph.harvard.edu}}
\and
\author{\fnms{Bodhisattva} \snm{Sen}\thanksref{t2}\ead[label=e2]{bodhi@stat.columbia.edu}}

%\author{\fnms{Third} \snm{Author}\thanksref{t1}
%\ead[label=e3]{third@somewhere.com}
%\ead[label=u1,url]{www.foo.com}}

%\thankstext{t1}{Supported by NSF CAREER Grant DMS-16-54589.}
\thankstext{t2}{Supported by NSF Grants DMS-17-12822 and AST-16-14743.}
\runauthor{R.~Mukherjee and B.~Sen}

\affiliation{Harvard University and Columbia University}

\address{Department of Biostatistics \\ Harvard University \\ 655 Huntington Avenue \\ Boston, MA 02115 \\ \printead{e1}}

\address{Department of Statistics \\ Columbia University \\ 1255 Amsterdam Avenue \\ New York, NY 10027 \\ \printead{e2}}
\end{aug}

\begin{abstract}
We consider the problem of estimating smooth integrated functionals of a monotone nonincreasing density $f$ on $[0,\infty)$ using the nonparametric maximum likelihood based plug-in estimator. We find the exact asymptotic distribution of this natural (tuning parameter-free) plug-in estimator, properly normalized. In particular, we show that the simple plug-in estimator is always $\sqrt{n}$-consistent, and is additionally asymptotically normal with zero mean  and the semiparametric efficient variance for estimating a subclass of integrated functionals. Compared to the previous results on this topic (see e.g.,~\cite{nickl2008uniform},~\cite{H14},~\citet[Chapter 7]{gine2015mathematical},~and \cite{sohl2015uniform}) our results hold  for a much larger class of functionals (which include linear and non-linear functionals) under less restrictive assumptions on the underlying $f$ --- we do not require $f$ to be (i) smooth, (ii) bounded away from $0$, or (iii) compactly supported. Further, when $f$ is the uniform distribution on a compact interval we explicitly characterize the asymptotic distribution of the plug-in estimator  --- which now converges at a non-standard rate --- thereby extending the results in~\cite{GP83} for the case of the quadratic functional.

\end{abstract}

\begin{keyword}
\kwd{efficient estimation}
\kwd{empirical distribution function}
%\kwd{Grenander estimator}
\kwd{Hadamard directionally differentiable}
\kwd{least concave majorant}
\kwd{nonparametric maximum likelihood estimator}
\kwd{shape-constrained density estimation}
%\kwd{plug-in estimator}
%\kwd{smooth functionals}
\end{keyword}

\end{frontmatter}\section{Introduction}
Estimation of functionals of data generating distributions is a fundamental research problem in nonparametric statistics. Consequently, substantial effort has gone into understanding estimation of nonparametric functionals (e.g., linear, quadratic, and other ``smooth" functionals) both under density and regression (specifically Gaussian white noise models) settings --- a comprehensive snapshot of this huge literature can be found in \cite{hall1987estimation}, \cite{bickel1988estimating}, \cite{donoho1990minimax1}, \cite{donoho1990minimax2}, \cite{fan1991estimation}, \cite{kerkyacharian1996estimating}, \cite{nemirovski2000}, \cite{laurent1996efficient}, \cite{cai2003note, cai2004minimax, cai2005nonquadratic} and the references therein. However, most of the above papers have focused on smoothness restrictions on the underlying (nonparametric) function class.  In contrast, parallel results for purely shape-restricted function classes (e.g., monotonicity/convexity/log-concavity) are limited. 

Estimation of a shape-restricted density is somewhat special among nonparametric density estimation problems. In shape-constrained problems classical maximum likelihood and/or least squares estimation strategies (over the class of all such shape-restricted densities) lead to consistent (see e.g., \cite{G56}, \cite{GroeneboomEtAl01}, \cite{DR09}) and adaptive estimators (see e.g.,~\cite{Birge87}, \cite{Kim16}) of the underlying density. Consequently, the resulting procedures are tuning parameter-free; compare this with density estimation techniques under smoothness constraints that usually rely on tuning parameter(s)  (e.g., bandwidth for kernel based procedures or resolution level for wavelet projections), the choice of which requires delicate care for the purposes of data adaptive implementations.

The simplest and the most well-studied shape-restricted density estimation problem is that of estimating a nonincreasing density on $[0,\infty)$. In this case, the most natural estimator of the underlying density is the Grenander estimator --- the nonparametric  maximum likelihood estimator (NPMLE) which also turns out to be the same as the least squares estimator; see e.g.,~\cite{G56},~\cite{PRao69},~\cite{G85},~\cite{GJ14}. Even for this problem only a few papers exist that study nonparametric functional estimation; see e.g.,~\cite{nickl2008uniform},~\citet[Chapter 7]{gine2015mathematical},~\cite{H14}, and \cite{sohl2015uniform}. Moreover, these papers often assume additional constraints such as: (i) smoothness, (ii) compact support, and (iii) restrictive lower bounds on the underlying true density. 

In this paper we study estimation of smooth integrated functionals of a nonincreasing density on $[0,\infty)$ using simple (tuning parameter-free) plug-in estimators. We characterize the limiting distribution of the plug-in estimator in this problem, under minimal assumptions. Moreover, we show that the limiting distribution (of the plug-in estimator) is asymptotically normal with the efficient variance for a large class of functionals, under minimal assumptions on the underlying density. {To the best of our knowledge, this is the first time that such a large class of integrated functionals based on the Grenander estimator are treated in a unified fashion}. %Further, compared to \textcolor{red}{the above cited papers}, we consider a much larger class of integrated functionals as we will see below. %that cover the examples traditionally considered in nonparametric statistics \citep{laurent1996efficient,laurent1997estimation,birge1995estimation}.

%However, from research perspectives, shape-restrictions are natural restrictions to study-- especially because the resulting procedures are often tuning parameter free. Driven by this motivation and gap in literature, in this paper we study estimation of a class of integral functionals in the density model when the density is only assumed to be monotonically decreasing and bounded.  
\subsection{Problem formulation and a  summary of our contributions}
Suppose that $X_1,\ldots,X_n$ are independent and identically distributed (i.i.d.) random variables from a nonincreasing density $f$ on $[0,\infty)$. Let $P$ denote the distribution of each $X_i$ and $F$ the corresponding distribution function. %and assume that $P$ has a (i.e. $P\ll \lambda$ where $\lambda$ is the Lebesgue measure on $(\mathbb{R},\mathcal{B}(\mathbb{R}))$ and $f=\frac{dP}{d\lambda}$)  such that $f(0+)\triangleq \lim_{x\downarrow 0}f(x) <\infty$. 
The goal is to study estimation and uncertainty quantification of the functional $\tau$ defined as
\begin{align}\label{def:functional}
\tau(g,f) := \int_0^\infty g(f(x), x) dx,
\end{align}
where $g:\R^2 \to \R$ is a known (smooth) function. Functionals of the form~\eqref{def:functional} belong to the class of problems considered by many authors including~\cite{levit1978asymptotically},~\cite{bickel1988estimating},~\cite{ibragimov1991asymptotically},~\cite{birge1995estimation},~\cite{kerkyacharian1996estimating},~\cite{laurent1996efficient},~\cite{laurent1997estimation},~\cite{nemirovski2000topics} etc. and include the monomial functionals $$\int_0^\infty f^p(x)dx\quad  \mbox{ for } \quad p\geq 2.$$  Of particular significance among them is the quadratic functional corresponding to $p=2$ --- being related to other inferential problems in density models such as goodness-of-fit testing and construction of confidence balls (see e.g.,~\citet[Chapter 8.3]{gine2015mathematical} and the references therein for more details).

Let $\hat f_n$ be the Grenander estimator in this problem (see Section~\ref{section:smooth_functionals} for its characterization). Then, a natural estimator of $\tau(g,f)$ is the simple {\it plug-in estimator} 
\begin{equation}\label{eq:tau_Plugin}
\tau(g, \hat f_n) := \int_0^\infty g(\hat f_n(x), x) dx.
\end{equation} 

In this paper we characterize the asymptotic distribution of $\tau(g, \hat f_n)$ for estimating $\tau(g,f)$, when $g$ is assumed to be smooth; see Section~\ref{assumptions} for the exact assumptions on $g$. Specifically, in Theorem~\ref{thm:asymp_eff}, we show that the simple plug-in estimator $\tau(g, \hat f_n)$ is $\sqrt{n}$-consistent, and we characterize its limiting distribution. Further, when $F$ is strictly concave we show that $\tau(g, \hat f_n)$ (properly normalized) is asymptotically normal with the (semiparametric) efficient variance (see Theorem~\ref{thm:asymp_eff-2}). Moreover, we can consistently estimate this limiting variance which readily yields asymptotically valid  confidence intervals for $\tau(g,f)$. Compared to the previous results on this topic (see e.g.,~\cite{nickl2007donsker}, \cite{nickl2008uniform}, \cite{H14},~\citet[Chapter 7]{gine2015mathematical} and \cite{sohl2015uniform}) our results holds under less restrictive assumptions on the underlying density (see conditions (A1)-(A3) in Section~\ref{assumptions}) --- in particular, we do not assume that $f$ is smooth, bounded away from $0$, or compactly supported.

An important subclass of the above considered functional class is:
\begin{align}\label{def:functional-3}
\mu(h,f) := \int_0^\infty h(f(x)) dx,
\end{align}
where $h:\R \to \R$ is a known (smooth) function. In Section~\ref{section:mu_hf} we study estimation of this functional in detail. The simple plug-in estimator for estimating $\mu(h,f)$ is 
$$\mu(h,\hat f_n) := \int_0^\infty h(\hat f_n(x)) dx.$$
For the class of functionals $\mu(h,f)$, under certain assumptions on the function $h(\cdot)$, we prove in Theorem~\ref{thm:asymp_eff_mu} that the plug-in estimator is asymptotically normal with the semiparametric efficient variance, even when $F$ is not strictly concave. In fact, the asymptotic normality even holds when $f$ is piecewise constant. Finally, in the special case when $f$ is constant  (i.e., $f$ is uniform on a compact interval) we explicitly characterize the asymptotic distribution of the plug-in estimator $\mu(h,\fhat)$,  which now converges at a non-standard rate; see Theorem~\ref{theorem:uniform}. This theorem extends the results in~\cite{GP83} beyond the quadratic functional.

Finally, in Section~\ref{section:nu_hf} we focus on a special case of $\mu(h,f)$:
\begin{align}\label{def:functional-2}
\nu(h,f) := \int_0^\infty h(f(x)) f(x)dx,
\end{align}
where $h:\R \to \R$ is a known (smooth) function. As before, the simple plug-in estimator for $\nu(h,f)$ is $\nu(h,\hat f_n) := \int h(\hat f_n(x))\hat f_n(x)dx$. Another intuitive estimator in this scenario would be $\frac{1}{n} \sum_{i=1}^n h(\hat f_n(X_i))$. It is easy to show that in this case both the above plug-in estimators are exactly the same (see Lemma~\ref{lem:Basic}). Further, we show in Remark~\ref{rem:One-Step} that both these estimators are also equivalent to the standard one-step bias-corrected estimator used in nonparametric statistics. As a consequence, we show that all these three estimators are now asymptotically normal with the (semiparametric) efficient variance.

\subsection{Comparison with the literature}
 
Estimation of functionals of the form \eqref{def:functional} (or \eqref{def:functional-3} and \eqref{def:functional-2}) in the density model, for $f$ belonging to smoothness classes (e.g., Sobolev and Besov balls), has been studied in detail in the literature. Historically, the case of ``smooth" $g$ while estimating \eqref{def:functional} (or ``smooth" $h$ while estimating \eqref{def:functional-3} and \eqref{def:functional-2}) is relatively well-understood and we direct the interested reader to   \cite{hall1987estimation},~\cite{bickel1988estimating},~\cite{donoho1990minimax1},~\cite{donoho1990minimax2},~\cite{fan1991estimation},~\cite{birge1995estimation}, ~\cite{kerkyacharian1996estimating},~\cite{laurent1996efficient},~\cite{nemirovski2000},~\cite{cai2003note},~\cite{cai2004minimax},~\cite{cai2005nonquadratic},~\cite{tchetgen2008minimax} and the references therein. 

A general feature of the results for estimating ``smooth" integrated nonparametric functionals is an elbow effect in the rate of estimation based on the smoothness of the underlying function class. For example, when estimating the quadratic functional, $\sqrt{n}$-efficient estimation can be achieved as soon as the underlying density has H\"{o}lder smoothness index $\beta \ge {1}/{4}$, whereas the optimal rate of estimation is $n^{-{4\beta}/{(4\beta+1)}}$ (in root mean squared sense) for $\beta<{1}/{4}$. Since monotone densities have one weak derivative one can therefore expect to have $\sqrt{n}$-consistent efficient estimation of $\tau(g,f)$ (under suitable assumptions on $g$). However, standard methods of estimation under smoothness restrictions involve expanding the infinite dimensional function in a suitable orthonormal basis of $L_2$ and estimating an approximate functional created by truncating the basis expansion at a suitable point. The point of truncation decides the approximation error of the truncated functional as a surrogate of  the actual functional and depends on the smoothness of the functional of interest and approximation properties of the orthonormal basis used. This point of truncation is then balanced with the bias and variance of the resulting estimator and therefore directly depends on the smoothness of the function. Consequently, most optimal estimators proposed in the literature depend explicitly on the knowledge of the smoothness indices. In this regard, our main result in the paper shows the optimality of tuning parameter-free plug-in procedure based on the Grenander estimator (under less restrictive  assumptions) for estimating integral functionals of monotone densities. 

We also mention that the results of this paper are directly motivated by the general questions tackled in~\cite{bickelritov2003} and \cite{nickl2007donsker}. In particular, \cite{bickelritov2003} considered nonparametric function estimators that  satisfy the desirable ``plug-in property". An estimator is said to have the plug-in property if it is simultaneously minimax optimal in a function space norm (e.g., in $l_2$-norm) and can also be ``plugged-in'' to estimate specific classes of functionals efficiently (and/or at $\sqrt{n}$-rate). Subsequently, \cite{bickelritov2003} demonstrated general principles for constructing such nonparametric estimators pertaining to linear functionals (allowing for certain extensions) with examples arising in various problems such as nonparametric regression, survival analysis, and density estimation. Indeed, in this paper we show that the Grenander estimator satisfies such a plug-in property for estimating smooth integrated functionals of monotone densities. 

{Note that such a plug-in property of the Grenander based estimator is also illustrated in the papers~\cite{sohl2015uniform} and \cite{nickl2007donsker} where the authors study linear functionals and the entropy functional crucially using a Kiefer-Wolfowitz type uniform central limit theorem (CLT) for the Grenander estimator. Such a proof strategy however comes with the price that more restrictive assumptions need to be made on the density $f$ to derive such a CLT. In particular, the results in \cite{sohl2015uniform} and \cite{nickl2007donsker} assume a bounded domain and a uniform positive lower bound on the true density $f$ --- assumptions we forgo in this paper.} %\textcolor{blue}{Indeed, ~\cite{nickl2007donsker} also alludes to the framework of \cite{bickelritov2003} and demonstrates that the nonparametric MLE for estimating a density $f$ over Sobolev spaces (with smoothness larger than $1/2$) possesses the desired plug-in property for estimating certain general smooth functionals (defined in a Fr\'{e}chet sense). The proof of this result is derived from a fundamental result obtained  in the same paper \citep{nickl2007donsker} pertaining to the weak convergence of $\sqrt{n}(\hat P_n-\P)$ in the space of bounded functionals over specific Sobolev balls. The desired plug-in property subsequently follows from a suitable Delta method-type argument (see \citet[Corollary 4]{nickl2007donsker} for more details). Such a general scheme of proof however comes with a price while working with more specific examples where minimal assumptions can be sought for estimating smooth integrated functionals. In particular, the applicability of the results in \cite{nickl2007donsker} relies on bounded domain and uniform lower bound on the density $f$ under study. Since we are interested in the particular case of estimating smooth integrated functionals of monotone densities, we do not assume any of these conditions.}

{\cite{H14} also considers estimation of linear functionals (and the entropy functional) based on the Grenander estimator. In~\citet[Theorem 3.1]{H14}, the limiting distribution of the Grenander based plug-in estimator is derived for estimating a linear functional (under possible misspecification, i.e., $f$ need not be nonincreasing). %when $f$ is assumed to have a bounded support and be strictly positive on its support.
\citet[Theorem 4.1]{H14} gives the limiting distribution of the plug-in estimator when estimating the entropy functional. However, all these results assume that the support of $f$ is bounded, an assumption that we do not make in this paper. }

\subsection{Organization of the paper}
The rest of the paper is organized as follows. In Section \ref{section:prelims} we collect a few useful notation, definitions, and results which will help the presentation of the rest of the paper. In Section~\ref{assumptions}, we define the class of integrated functionals of interest along with our main assumptions.  Section~\ref{sec:Tau_g_f_n} discusses the construction of the plug-in estimator and the main results of this paper (Theorems~\ref{thm:asymp_eff} and~\ref{thm:asymp_eff-2}) which gives $\sqrt{n}$-consistency and a characterization of the asymptotic distribution of the estimator. Subsequently, in Sections \ref{section:mu_hf} and \ref{section:nu_hf} we focus on the special classes of functionals --- $\mu(h,f)$ and $\nu(h,f)$ --- and provide explicit results (Theorem~\ref{thm:asymp_eff_mu} and Corollary~\ref{cor:asymp_eff_nu}) to show that our plug-in estimator is always $\sqrt{n}$-consistent, asymptotically normal and semiparametric efficient. 
Since the case of the uniform density on a compact interval  deserves a finer asymptotic expansion, we devote Section~\ref{section:uniform} to this purpose. Section~\ref{section:numericals} presents some numerical results that validate and illustrate our theoretical findings. In Section~\ref{sec:discussion}, we discuss some potential future research problems. Finally, all the technical proofs are relegated to Section~\ref{section_appendix_proofs}.

\section{Estimation of the Integrated Functional $\tau(g,f)$}\label{section:smooth_functionals} 

\subsection{Preliminaries}\label{section:prelims} 

In this subsection we introduce some notation and definitions to be used in the rest of the paper. Throughout we let $\R_+$ denote the (compact) nonnegative real line $[0,\infty]$. The underlying probability space on which all random elements are defined is $(\Omega, \mathcal{F}, \P)$. Suppose that we have nonnegative random variables $$X_1,\ldots, X_n \stackrel{\mathrm{i.i.d.}}{\sim} P$$ having a {\it nonincreasing} density $f$ and (concave) distribution function $F$. In the rest of the paper, we use the operator notation, i.e., for any function $\psi:\R  \to \R$, $P[\psi] := \int \psi(x) dP(x)$ denotes the expectation of $\psi(X)$ under the distribution $X \sim P$. For any two functions $\gamma_1,\gamma_2: [0,\infty) \rightarrow \R$ with $\int |\gamma_1(x)|dx< +\infty$, we view $\int \gamma_2(x)d(\gamma_1(x))$ as a Lebesgue-Stieltjes integral.

Let $\P_n$ denote the empirical measure of the $X_i$'s and let $\F_n$ denote the corresponding empirical distribution function, i.e., for $x \in \R$, $$\F_n(x) := \frac{1}{n}\sum_{i=1}^n \mathbf{1}_{(-\infty, x]}(X_i).$$ For a nonempty set $T$, we let $\ell^{\infty}(T)$ denote the set of uniformly bounded, real-valued functions on $T$. Of particular importance is the space $\ell^{\infty}(\R_+)$, which we equip with the uniform metric $\|\cdot\|_\infty$ and the ball $\sigma$-field; see \citet[Chapters IV and V]{Pollard84book} for background. 

Following~\cite{BF16}, we now define the notion of the {\it least concave majorant} (LCM) operator. Given a nonempty convex set $T \subset \R_+$, the LCM over $T$ is the operator $\M_T : \ell^\infty(\R_+) \to \ell^\infty(T)$ that maps each $\xi \in \ell^\infty(\R_+)$ to the function $\M_T\xi$ where 
$$\M_T\xi(x) : = \inf \Big\{ \eta(x) : \eta \in \ell^\infty(T), \; \eta \mbox{ is concave, and } \xi \le \eta \mbox{ on } T\Big \}, \quad x \in T.$$ We write $\M$ as shorthand for $\M_{\R_+}$ and refer to $\M$ as the LCM operator. \citet[Proposition 2.1]{BF16} shows that the LCM operator $\M$ is {\it Hadamard directionally differentiable} (see e.g.,~\citet[Definition 2.2]{BF16}) at any concave function $\theta \in \ell^{\infty}(\R_+)$ tangentially to $C(\R_+)$ (here $C(\R_+)$ denotes the collection of all continuous real-valued functions on $\R_+$ vanishing at infinity). Let us denote the Hadamard directional derivative of $\M$ at the  concave function $\theta \in \ell^{\infty}(\R_+)$ by $\M_\theta'$. The following result, due to \citet[Proposition 2.1]{BF16}, characterizes the Hadamard directional  derivative of $\M_\theta'$, for $\theta \in \ell^{\infty}(\R_+)$, tangentially to $C(\R_+)$ and is crucial to our subsequent analysis.

\begin{proposition}[Proposition 2.1 of~\cite{BF16}]\label{prop:BF}
	The LCM operator $\M : \ell^\infty(\R_+) \to \ell^\infty(\R_+)$ is Hadamard directionally differentiable at any concave $\theta \in \ell^\infty(\R_+)$ tangentially to $C(\R_+)$. Its directional derivative $\M_\theta' : C(\R_+) \to \ell^\infty(\R_+)$ is uniquely determined as follows: for any $\xi \in \ell^\infty(\R_+)$ and $x \ge 0$, we have $$\M_\theta'\xi(x)=\M_{T_{\theta,x}}\xi(x),$$ where $T_{\theta,x} = \{x\} \cup U_{\theta,x}$, and $U_{\theta,x}$ is the union of all open intervals $A \subset \R_+$ such that (i) $x \in A$, and (ii) $\theta$ is affine over $A$.
\end{proposition}

Let $\hat F_n$ be the LCM of $\F_n$, i.e., $\hat F_n$ is the smallest concave function that sits above $\F_n$.  
Let $\B(\cdot)$ be the Brownian bridge process on $[0,1]$. By Donsker's theorem (see e.g.,~\citet[Theorem 19.3]{van2000asymptotic}) we know that the empirical process $ \D_n := \sqrt{n} (\F_n - F)$ converges in distribution to $\G \equiv \B \circ F$ in $\ell^\infty(\R_+)$, i.e., $$\D_n := \sqrt{n} (\F_n - F) \stackrel{d}{\to} \G.$$ 

As the delta method is valid for Hadamard directionally differentiable functions (see~\cite{Shapiro91}), as a consequence of the above discussion it follows that (see~\citet[Theorem 2.1]{BF16}), for any concave $F$, the stochastic processes $\hat \D_n := \sqrt{n} (\hat F_n - F)$ converges in distribution to $\hat \G$, 
\be
\hat \D_n := \sqrt{n} (\hat F_n - F) \stackrel{d}{\to} \hat \G, \ee 
where  
\be\label{eq:LCM_HD}
\hat \G := \M_F' \G.
\ee  
{To give more intuition about the stochastic process $\hat \G$ and the operator $\M_F'$, let us consider two simple scenarios. First, suppose that $F$ is strictly concave. Then $\hat \G = \M_F' \G = \G$ as by Proposition~\ref{prop:BF}, for every $x >0$, $$\M_F' \G(x) = \M_{\{x\}} \G(x) = \G(x);$$ also see~\citet[Proposition 2.2]{BF16}. Thus, $\D_n$ and $\hat \D_n$ converge to the same limiting object in this case. 

Next, let us suppose that $F$ is piecewise affine. In this case $f$ is piecewise constant with jumps (say) at $0<t_1<t_2<\ldots< t_k<\infty$ and values $v_1>\ldots> v_k$, for some $k \ge 2$ integer, i.e., i.e., for $x >0$,
\begin{equation}\label{eq:PW-C}
f(x) = \sum_{i=1}^k v_i \mathbf{1}_{(t_{i-1},t_i]}(x),
\end{equation} 
where $t_0 \equiv 0$. Then, 
\begin{equation}\label{eq:PW-Cons}
\M_F' \G(x) = \begin{cases} \M_{(t_{i-1},t_i)} \G(x), & \qquad \mbox{if}\;\; x \in (t_{i-1},t_i) \\ \G(x), & \qquad \mbox{if}\;\; x =t_i \;\; \mbox{for } i = 0,1,\ldots, k.\end{cases}
\end{equation}}
 
\subsection{Assumptions}\label{assumptions}

In this subsection we state the assumptions needed for our main results.
\begin{itemize}
	\item[(A1)] The true underlying density $f:[0,\infty)\rightarrow [0,\infty)$ is  nonincreasing with 
	\begin{equation*}\label{eq:f0+}
		 f(0+): = \lim_{x \downarrow 0} f(x) < +\infty. 
	\end{equation*} 
	
	\item[(A2)] We assume that for some $\alpha \in (0,1]$, 
	\begin{equation*}\label{eq:TailC}
	\quad \int_{f<1} f^{1-\alpha} (x) dx < +\infty.
	\end{equation*}
	
	\item[(A3)] Let $\s$ denote the support of $f$.  The function $g$ in \eqref{def:functional} satisfies the following conditions: 
	\begin{enumerate}
		\item [(i)] The functional \eqref{def:functional} is well-defined, i.e., $|\tau(g,f)| < +\infty$.
		
		\item [(ii)] $g(\cdot,x) \in {C^2}([0,\infty))$,  for every $x \in [0,\infty)$, where $C^2([0,\infty))$ denotes the class of all {twice} continuously differentiable functions on $[0,\infty)$. 
		
		\item [(iii)] Let $\dot{g}$ and $\ddot{g}$ denote the derivatives of $g(\cdot,\cdot)$ only with respect to the first coordinate, i.e., for $z,x\in [0,\infty)$, $$\dot{g}(z,x) = \frac{\partial}{\partial z} g(z,x) \qquad \mbox{ and } \qquad \ddot{g}(z,x) = \frac{\partial^2}{\partial z^2} g(z,x).$$ We assume that the following hold:
\begin{enumerate}
			\item [(a)] For any $B>0$ there exists a constant $K_B < \infty$ such that $$%\sup_{x \ge 0, \; z \in [0,B] }{|{\dot{g}(z,x)}|} \le K_B, \qquad\mbox{and} \qquad 
			\sup_{x \in \s, \; z \in [0,B] }{|{\ddot{g}(z,x)}|} \le K_B.$$
			
			\item [(b)] $\dot{g}(f(\cdot),\cdot)$ is of bounded total variation on $[0,\infty)$, i.e., $$\int_{\s} |d[\dot{g}(f(x),x)]|<+\infty.$$
\end{enumerate}	
\end{enumerate}

\end{itemize}
Let us briefly discuss assumptions (A1)-(A3). Conditions (A1) and (A2) on the true density $f$ imply that $f$ is bounded and satisfies a tail decay condition. Note that (A2) is satisfied if for some $m >1$, $\limsup_{x \to \infty} x^m f(x) < \infty$; see e.g.,~\citet[Example 7.4.2]{geer2000empirical}. In particular, if $f$ is bounded and compactly supported, conditions (A1) and (A2) immediately hold. 

Condition (A3)-(i) is unavoidable since it pertains to the existence of the functional we are interested in estimating. Parts (ii) and (iii,a) of (A3) pertain to the function $g(\cdot,\cdot)$ and quantifies the notion of the smooth functional we consider. Finally, part (iii,b) of (A3) is slightly stronger than the requirement that $\int \dot{g}(f(x),x)f(x)dx$ exists --- which in turn appears in the information bound for estimating $\tau(g,f)$ and is therefore needed to be finite for efficient estimators to exist. More precisely, the finite total variation of $\dot{g}(f(x),x)$ helps us define several Lebesgue-Stieltjes integrals with respect to $\dot{g}(f(x),x)$ without further conditions.  

Observe that our assumption (A3) implies twice (right) differentiability of $g(\cdot,x)$ (for every $x$) at $0$ and hence does not include the entropy functional corresponding to $g(z,x)=-z\log{z}$, unless $f$ is assumed to be bounded away from $0$.

\subsection{$\sqrt{n}$-Consistency of $\tau(g,\hat f_n)$}\label{sec:Tau_g_f_n}
As mentioned before, a natural estimator of $f$ in this situation is $\hat f_n$, the Grenander estimator --- the (nonparametric) maximum likelihood estimator of a nonincreasing density on $[0,\infty)$; see~\cite{G56}. The Grenander estimator $\hat f_n$ is defined as the left-hand slope of $\hat F_n$, the LCM of the empirical distribution function $\F_n$. Note that $\hat F_n$ is piecewise affine and a valid {\it concave} distribution function by itself. Further, $\hat f_n$ is a piecewise constant (nonincreasing) density with possible jumps only at the data points $X_i$'s. Moreover, $\hat f_n(x) = 0$ for $x > \max\limits_{i=1,\ldots, n} X_i$. Consequently, a natural plug-in estimator of $\tau(g, f)$ is 
$$\tau(g, \hat f_n) := \int_0^\infty g(\hat f_n(x), x) dx.
$$ 
We study estimation and uncertainty quantification of the functional $\tau(g, \hat f_n)$. The following result shows that the plug-in estimator $\tau(g, \hat f_n)$ is $\sqrt{n}$-consistent and explicitly characterizes its asymptotic distribution. 
\begin{theorem}\label{thm:asymp_eff} 
Assume that conditions (A1)-(A3) hold. Then, 
\begin{align}\label{eq:Eff}
	\sqrt{n}\left(\tau(g,\hat{f}_n)-\tau(g,f)\right) \stackrel{d}{\to}  - \int_0^\infty \hat{\mathbb{G}}(x) \, d[\dot{g}(f(x), x)] =: Y
\end{align}
where $\hat{\mathbb{G}}(\cdot)$ is defined in~\eqref{eq:LCM_HD}.  
\end{theorem}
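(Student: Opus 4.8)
The plan is to use a Taylor expansion of $g$ in its first argument around the true density $f$, combined with the functional delta method for the LCM operator established in Proposition~\ref{prop:BF}. First I would write, for each fixed $x\in\s$,
\begin{align*}
g(\hat f_n(x),x)-g(f(x),x) = \dot g(f(x),x)\,(\hat f_n(x)-f(x)) + \tfrac12\,\ddot g(\zeta_n(x),x)\,(\hat f_n(x)-f(x))^2,
\end{align*}
where $\zeta_n(x)$ lies between $\hat f_n(x)$ and $f(x)$. Integrating over $[0,\infty)$ splits $\sqrt n(\tau(g,\hat f_n)-\tau(g,f))$ into a linear term $\sqrt n\int_0^\infty \dot g(f(x),x)(\hat f_n(x)-f(x))\,dx$ and a quadratic remainder $\tfrac{\sqrt n}{2}\int_0^\infty \ddot g(\zeta_n(x),x)(\hat f_n(x)-f(x))^2\,dx$. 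The strategy is then: (i) show the linear term converges in distribution to $Y$, and (ii) show the quadratic remainder is $o_{\p}(1)$.

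For the linear term, the key device is integration by parts: since $\hat f_n$ and $f$ are the (left) derivatives of $\hat F_n$ and $F$ respectively, and $\dot g(f(\cdot),\cdot)$ has bounded total variation by (A3)(iii,b), a Lebesgue--Stieltjes integration by parts gives
\begin{align*}
\int_0^\infty \dot g(f(x),x)\,d(\hat F_n - F)(x) = \big[\dot g(f(x),x)(\hat F_n-F)(x)\big]_0^\infty - \int_0^\infty (\hat F_n - F)(x)\,d[\dot g(f(x),x)],
\end{align*}
with the boundary term vanishing because $\hat F_n - F \to 0$ at $0$ and at $\infty$. Multiplying by $\sqrt n$, the left side is $\int_0^\infty \dot g(f(x),x)\,d\hat\D_n(x)$ and the right side is $-\int_0^\infty \hat\D_n(x)\,d[\dot g(f(x),x)]$. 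Since $\hat\D_n \stackrel{d}{\to}\hat\G$ in $\ell^\infty(\R_+)$ (from~\eqref{eq:LCM_HD} and the delta method for Hadamard directionally differentiable maps), and $\xi \mapsto \int_0^\infty \xi(x)\,d[\dot g(f(x),x)]$ is a continuous linear functional on $\ell^\infty(\R_+)$ under the uniform norm (continuity because $\dot g(f(\cdot),\cdot)$ has finite total variation), the continuous mapping theorem yields the linear term $\stackrel{d}{\to} -\int_0^\infty \hat\G(x)\,d[\dot g(f(x),x)] = Y$. Some care is needed at $x=\infty$: one should truncate at a large $M$, control the tail $\int_M^\infty$ using (A2) and the fact that $\hat f_n$ is eventually zero, then let $M\to\infty$.

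The main obstacle is controlling the quadratic remainder $\tfrac{\sqrt n}{2}\int_0^\infty \ddot g(\zeta_n(x),x)(\hat f_n(x)-f(x))^2\,dx$ and showing it is $o_{\p}(1)$. On a compact region where $f$ is bounded away from $0$, one has $\|\hat f_n - f\|_{\infty}$ locally of order $n^{-1/3}$, so $(\hat f_n-f)^2$ is of order $n^{-2/3}$, and after multiplying by $\sqrt n$ one gets $n^{-1/6}\to 0$ — but this crude bound is too lossy near points where $f$ is small or flat and near the boundary of the support, so the real work is a more delicate argument using $L^2$-type control of $\int(\hat f_n - f)^2$ (which is $O_{\p}(n^{-2/3})$ globally under suitable integrability, by known risk bounds for the Grenander estimator) together with the uniform bound on $\ddot g$ over bounded arguments from (A3)(iii,a), and the tail condition (A2) to handle the region $\{f<1\}$ where $\zeta_n$ may be large only on a set of small measure. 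One would partition $[0,\infty)$ into $\{f\ge 1\}$ (compact, $\ddot g$ uniformly bounded there with $B$ depending on $f(0+)$ via (A1)) and $\{f<1\}$, bound the first piece by $\|\ddot g\|_\infty \int(\hat f_n-f)^2$, and bound the second piece using (A2) and a bound on $\hat f_n$ there. This is where conditions (A1)--(A3) are all genuinely used, and assembling the tail estimate cleanly is the crux of the argument.
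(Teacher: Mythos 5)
Your decomposition (Taylor expansion of $g$ in the first argument, integration by parts on the linear term, continuous mapping via the delta method for the LCM, and showing the quadratic remainder is $o_{\P}(1)$) is exactly the paper's skeleton, and your treatment of the linear term is essentially correct. The gap is in the remainder, and you partly flag it yourself: the partition into $\{f\ge 1\}$ and $\{f<1\}$ is not the right mechanism, and the argument as sketched does not close.

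The issue is that (A3)(iii,a) only bounds $\ddot g(z,x)$ uniformly in $x\in\s$ for $z$ in a \emph{bounded} set, and $\zeta_n(x)$ lies between $f(x)$ and $\hat f_n(x)$, so you need a uniform-in-$x$ bound on $\hat f_n$ to invoke it. That is supplied not by any partition of the domain but by the tightness of $\hat f_n(0+)/f(0+)$ from \cite{woodroofe1993penalized}: on the high-probability event $A_n=\{\|\hat f_n\|_\infty\le M^* f(0+)\}$ one gets $\sup_{x\in\s}|\ddot g(\zeta_n(x),x)|\le K$ globally, hence $|w_n|\le \tfrac{K}{2}\|\hat f_n-f\|_2^2$ over the \emph{whole} support. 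The $L^2$ rate is then not a direct ``known risk bound'' but comes via the Hellinger distance: on $A_n$ one has $\|\hat f_n-f\|_2^2\lesssim f(0+)\,H^2(\hat P_n,P)$, and \citet[Theorem 7.12]{geer2000empirical} gives $H^2(\hat P_n,P)=O_{\P}(n^{-2/3})$ under (A1)--(A2), so $\sqrt n\,|w_n|=O_{\P}(n^{-1/6})=o_{\P}(1)$. This is also where (A2) actually enters — as the envelope/entropy condition for the Hellinger rate — rather than as a device for controlling $\zeta_n$ on $\{f<1\}$, where your worry about $\zeta_n$ being large is not in fact the obstruction (on $\{f<1\}$ both $f$ and, on $A_n$, $\hat f_n$ are already controlled). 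So the missing ideas are the Woodroofe--Sun tightness of the Grenander sup and the Hellinger-to-$L^2$ route; once those are in place the remainder is handled in one stroke with no domain splitting.
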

We defer the proof of Theorem~\ref{thm:asymp_eff} to Section~\ref{sec:Steps} and before proceeding further, make a few comments regarding the implications of Theorem \ref{thm:asymp_eff}.

\begin{remark}[Random variable $Y$] The limiting random variable $Y$ in the statement of Theorem \ref{thm:asymp_eff} is well-defined. Note that $Y = - \int_\s \hat{\mathbb{G}}(x) \, d[\dot{g}(f(x), x)]$ as $\hat{\mathbb{G}}$ is zero outside $\s$, the support of the distribution $P$. This follows from the following facts: (i) $\hat{\mathbb{G}}$ is a (measurable) random element in $\ell^{\infty}(\R_+)$ (see e.g.,~\cite{BF16}); (ii) by assumption (A3)-(iii,b), for any $\phi\in \ell^{\infty}(\R_+)$, the map $\phi \mapsto \int_\s \phi(x)d[\dot{g}(f(x),x)]$ is continuous and measurable (from $\ell^{\infty}(\R_+)$ to $\mathbb{R}$).
\end{remark}

\begin{remark}[Assumptions in Theorem~\ref{thm:asymp_eff}]
Note that Theorem~\ref{thm:asymp_eff} is valid under much weaker assumptions on $f$ than previously studied in the literature. Although, similar results have been derived in  \cite{nickl2008uniform} and~\citet[Chapter 7]{gine2015mathematical}, the derivations relied on further smoothness, lower bound, and compact support assumptions on $f$. In contrast, our only technical assumptions (A1) and (A2) are significantly less restrictive and only demand $f$ to be bounded and have a polynomial-type tail decay.
\end{remark}

\begin{remark}[Connection to estimation in smoothness classes] For smoothness classes of functions it is well-known that $\sqrt{n}$-consistent estimation of $\tau(g,f)$ is possible if $f$ has more than ${1}/{4}$ derivatives (see e.g., \cite{birge1995estimation}) and a typical construction of such an estimator proceeds via a bias-corrected one-step estimator; see e.g.,~\cite{bickel1988estimating},~\cite{robins2008higher},~\cite{robins2015higher}. Since monotone densities have one weak derivative, one can expect $\sqrt{n}$-consistent estimation of $\tau(g,f)$. Theorem~\ref{thm:asymp_eff} shows that in fact the plug-in principle based on the Grenander estimator is $\sqrt{n}$-consistent with a distributional limit without any further assumptions on $f$. It will be further shown in Section \ref{section:nu_hf} that for estimating the special functional $\nu(h,f)$, defined in~\eqref{def:functional-2}, the one-step bias-corrected estimator is equivalent to the plug-in estimator. Consequently, this will provide more intuition on $\sqrt{n}$-consistent efficient estimation of $\nu(h,f)$, without further assumptions.  
\end{remark}

\begin{remark}[Tuning parameter-free estimation]
Being based on the Grenander estimator and the plug-in principle, the estimator $\tau(g,\hat{f}_n)$ is completely tuning parameter-free unlike those considered in~\cite{robins2008higher},~\cite{mukherjee2016adaptive},~\cite{mukherjee2017semiparametric}. 
\end{remark}
	
In the following result (proved in Section~\ref{sec:Thm-Normal-Limit}), we show that when $F$ is strictly concave, the plug-in estimator $\tau(g, \hat f_n)$ is $\sqrt{n}$-consistent and asymptotically normal with the (semiparametric) efficient variance. 
\begin{theorem}\label{thm:asymp_eff-2} 
Suppose that $F$ is strictly concave on $[0,\infty)$. Then, under the assumptions of Theorem~\ref{thm:asymp_eff}, we have 
\begin{align}\label{eq:Eff-2}
	Y = - \int_0^\infty {\mathbb{G}}(x) \, d[\dot{g}(f(x), x)] \stackrel{d}{=} N(0,\sigma^2_{\mathrm{eff}}(g,f)),
	\end{align}
where, for $X \sim P$, 
\begin{equation}\label{eq:sigma}\sigma^2_{\mathrm{eff}}(g,f) := \mathrm{Var}\Big(\dot{g}(f(X), X)\Big) < +\infty.
\end{equation}
\end{theorem}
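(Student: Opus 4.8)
The plan is to start from the representation of the limiting variable $Y$ given in Theorem~\ref{thm:asymp_eff}, namely $Y = -\int_0^\infty \hat{\mathbb{G}}(x)\, d[\dot g(f(x),x)]$, and first use the hypothesis that $F$ is strictly concave. By Proposition~\ref{prop:BF}, for strictly concave $F$ the set $U_{F,x}$ of open intervals on which $F$ is affine is empty, so $T_{F,x} = \{x\}$ and $\M_F'\G(x) = \M_{\{x\}}\G(x) = \G(x)$ for every $x>0$; hence $\hat{\mathbb{G}} = \G = \B\circ F$, the $F$-time-changed Brownian bridge. This immediately gives the first equality in~\eqref{eq:Eff-2}, $Y = -\int_0^\infty \G(x)\,d[\dot g(f(x),x)]$, so the remaining task is purely to compute the law of this Gaussian integral.

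Next I would show that $Y$ is Gaussian with mean zero and identify its variance. Since $\G = \B\circ F$ is a mean-zero Gaussian process with continuous sample paths (on $\s$) and the map $\phi \mapsto \int_\s \phi(x)\,d[\dot g(f(x),x)]$ is a bounded linear functional on $\ell^\infty(\R_+)$ by assumption (A3)-(iii,b), $Y$ is a mean-zero Gaussian random variable; it remains to compute $\mathrm{Var}(Y) = \E[Y^2]$. The cleanest route is integration by parts: writing $Z(x) := \dot g(f(x),x)$, which has finite total variation on $\s$, and using that $\G$ vanishes at $0$ and at $\infty$, we get $Y = -\int_\s \G\, dZ = \int_\s Z(x)\, d\G(x)$ in the Lebesgue--Stieltjes sense, up to justifying the boundary terms (here I would be slightly careful, since $\G$ is only of bounded $p$-variation for $p>2$, not bounded variation, so the integration by parts must be read as the standard identity $\int \G\,dZ + \int Z\,d\G = [\G Z]$ valid whenever one of the two integrands has finite variation; the boundary term vanishes because $\G(0)=0$ and $\G(\infty)=0$, the latter since $F(\infty)=1$ and $\B(1)=0$). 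Then, recalling that $\B\circ F$ has the same covariance structure as the weak limit of $\sqrt n(\F_n - F)$, the integral $\int_\s Z(x)\,d\G(x)$ has the same distribution as the limit of $\sqrt n \int_\s Z\, d(\F_n - F) = \sqrt n\big(\P_n[Z\mathbf 1_\s] - P[Z\mathbf 1_\s]\big)$ (an alternative, fully self-contained way to see this is to recognize $\int Z\,d\G$ as the Gaussian limit in the CLT for the centered statistic $\frac1n\sum_i Z(X_i) - P[Z]$), whence by the classical CLT $Y \stackrel{d}{=} N(0, \V(Z(X))) = N(0,\sigma^2_{\mathrm{eff}}(g,f))$.

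Finally I would verify finiteness of $\sigma^2_{\mathrm{eff}}(g,f) = \V(\dot g(f(X),X))$. Since $\dot g(f(\cdot),\cdot)$ has finite total variation on $\s$ by (A3)-(iii,b) and is finite at the right endpoint (taking value $\dot g(0, \cdot)$ or the relevant limit), it is bounded on $\s$; a bounded random variable has finite variance, so $\sigma^2_{\mathrm{eff}}(g,f) < \infty$. (If $\s$ is unbounded one still argues boundedness of $x\mapsto \dot g(f(x),x)$ from finite total variation together with the existence of a finite limit at infinity, which is forced by the integrability built into (A3).)

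\textbf{Main obstacle.} The conceptual content of the theorem is light once Theorem~\ref{thm:asymp_eff} is in hand; the only genuinely delicate point is the integration-by-parts / covariance computation that converts $-\int_\s \G\,d[\dot g(f(x),x)]$ into the variance $\V(\dot g(f(X),X))$ — in particular, justifying the vanishing of boundary terms at $\infty$ when $\s$ is unbounded and making the Lebesgue--Stieltjes manipulations rigorous given that $\G$ itself is not of bounded variation. Everything else (Gaussianity, mean zero, finiteness of the variance) is routine given assumptions (A3)-(iii,a,b).
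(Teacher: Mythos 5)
Your proof is correct, and your reduction to $\hat{\mathbb{G}}=\mathbb{G}$ via Proposition~\ref{prop:BF} is exactly what the paper does. Where you genuinely diverge is the identification of $\mathrm{Var}(Y)$. The paper takes a computational route: it writes $\mathbb{B}\circ F \stackrel{d}{=} \mathbb{W}\circ F - \mathbb{W}(1)F$ for a standard Brownian motion $\mathbb{W}$, handles the (possibly unbounded) support by a truncation $[0,N]$ and passing to the limit, and then proves a dedicated lemma (Lemma~\ref{lemma:Ito_Isometry}) computing $\E\big(\mathbb{W}(1)\int\mathbb{W}\circ F\,d\psi\big)$ and $\E\big(\int\mathbb{W}\circ F\,d\psi\big)^2$ directly by Fubini and repeated integration by parts, assembling the variance $\int\psi^2\,dF-\big(\int\psi\,dF\big)^2$ by hand. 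You instead integrate by parts once, $-\int\mathbb{G}\,d\psi=\int\psi\,d\mathbb{G}$ (boundary terms vanish since $\mathbb{G}(0)=\mathbb{G}(\infty)=0$), and then recognize the right-hand side as the weak limit of $\sqrt{n}\int\psi\,d(\mathbb{F}_n-F)=\sqrt{n}\big(\mathbb{P}_n[\psi]-P[\psi]\big)$, invoking the classical CLT and uniqueness of weak limits. This is shorter and avoids the explicit covariance bookkeeping; it does require a bit of care that you correctly flag — giving meaning to $\int\psi\,d\mathbb{G}$ via integration by parts since $\mathbb{G}$ is not of bounded variation, and justifying the boundary term at $+\infty$ when $\mathcal{S}$ is unbounded. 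The paper's truncation-plus-Fubini argument is more laborious but fully self-contained and avoids appealing to the CLT a second time. Both are valid; the conclusion and the finiteness argument for $\sigma^2_{\mathrm{eff}}(g,f)$ (boundedness of $\psi$ from its finite total variation) agree with the paper.
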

%{\color{red} Remark that $\sigma^2_{\mathrm{eff}}(f)$ exists.}

In general, when $F$ is not strictly concave, we cannot say if $Y$ (described by \eqref{eq:Eff}) admits a simpler description. Indeed, the integral $\int \hat{\mathbb{G}}(x)d[\dot{g}(f(x), x)]$ can be a highly non-linear functional of the Gaussian process ${\mathbb{G}}$ and consequently there is no immediate reason to believe that the limiting distribution is normal. Surprisingly though, for a special class of functionals, namely $\mu(h,f)$ introduced in~\eqref{def:functional-3}, the distribution of $\int  \hat{\mathbb{G}}(x)d[\dot{g}(f(x), x)]$ is always Gaussian. The next section is therefore devoted to the complete understanding of this special case.

\begin{remark}[Linear functional] {Theorem \ref{thm:asymp_eff}  covers the case of estimating an integrated linear functional $\int w(x) f(x) dx$ where $w(\cdot)$ is any weight function with finite variation on $\s$. Indeed, this follows by taking $g(z,x) = w(x) z$ in Theorem~\ref{thm:asymp_eff}. 

When $F$ is strictly concave, Theorem~\ref{thm:asymp_eff-2} shows that the limiting distribution of the plug-in estimator is asymptotically normal. When $F$ has a density $f$ which is piecewise constant and has the form~\eqref{eq:PW-C} and $w(\cdot)$ is continuous, Theorem~\ref{thm:asymp_eff} along with~\eqref{eq:PW-Cons} yields $$\sqrt{n} \left\{\int w(x) \hat f_n(x) dx - \int w(x) f(x) dx \right\}  \stackrel{d}{\to} - \sum_{i=1}^k \int_{t_{i-1}}^{t_i} \M_{(t_{i-1},t_i)} \G(x) dw(x);$$
cf.~\cite{H14} where the author derives the limiting distribution for linear functionals (under model-misspecification) and expresses the limit in a different form.}
\end{remark}

\section{Plug-in Efficient Estimation of $\mu(h,f)$}\label{section:mu_hf}
In this section we focus on the special case of estimating $\mu(h,f)$ (as defined in~\eqref{def:functional-3}) where $h:[0,\infty) \to \R$ is assumed to be a known function satisfying:
\begin{itemize}
	\item[(A4)] $\hspace{2in} h \in {C^2}([0, \infty))$, \newline where ${C^2}([0, \infty))$ denotes the class of all {twice} continuously differentiable functions on $[0,\infty)$ and $$\int |d [h'(f(x))]| < + \infty.$$
\end{itemize}

The natural plug-in estimator of $\mu(h,f)$ is 
$$ \mu(h, \hat f_n) := \int_0^\infty h(\hat f_n(x)) dx.$$ 
Theorem~\ref{thm:asymp_eff} can be used to obtain the $\sqrt{n}$-consistency and the asymptotic distribution of $\mu(h,\hat f_n)$ by considering $g(z,x) = h(z)$. Note that for $h(\cdot)$ satisfying (A4), the corresponding $g$ automatically satisfies (A3) defined in Section~\ref{assumptions}. However, in this case we can simplify the form of the limiting distribution $Y$ (see~\eqref{eq:Eff}) and prove that it is indeed a mean zero normal random variable with the efficient variance. Theorem~\ref{thm:asymp_eff_mu} (proved in Section~\ref{sec:Reduction}) formalizes this.
 \begin{theorem}\label{thm:asymp_eff_mu} 
	Assume that conditions (A1)-(A2) and (A4) hold. Then, 
	\begin{align}\label{eq:Eff_mu}
	\sqrt{n}\left(\mu(h,\hat{f}_n)-\mu(h,f)\right) \stackrel{d}{\to} 
	N(0,\sigma^2_{\mathrm{eff}}(h,f)),
	\end{align}
	where, for $X \sim P$, $\sigma^2_{\mathrm{eff}}(h,f) := \mathrm{Var}\Big(h'(f(X))\Big) < +\infty$.
\end{theorem}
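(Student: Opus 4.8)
The plan is to derive the result from Theorem~\ref{thm:asymp_eff} applied to $g(z,x)=h(z)$. Since $\mu(h,f)=\tau(g,f)$ for this choice, and (as noted right after the statement) condition (A4) guarantees that such a $g$ satisfies (A3) --- in particular that $\mu(h,f)$ is well defined --- Theorem~\ref{thm:asymp_eff} applies and gives, writing $\psi:=h'\circ f=\dot{g}(f(\cdot),\cdot)$,
\[
\sqrt{n}\big(\mu(h,\hat f_n)-\mu(h,f)\big)\ \stackrel{d}{\to}\ Y\ :=\ -\int_0^\infty \hat{\mathbb{G}}(x)\,d[\psi(x)].
\]
So the whole problem reduces to identifying the law of $Y$, and the one feature of $\mu(h,f)$ that makes this possible (and that fails for a general $g(z,x)$) is that $\psi=h'\circ f$ is \emph{constant on every interval on which $f$ is constant} --- which is precisely where the process $\hat{\mathbb{G}}$ can depart from $\mathbb{G}$.

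Accordingly, I would first localize the integrating measure. Let $\mathcal{N}:=\bigcup\{A:A\subset\R_+\text{ an open interval on which }F\text{ is affine}\}$. On each maximal component of $\mathcal{N}$ the density $f$, hence $\psi$, is constant, so $d\psi$ charges no interior point of $\mathcal{N}$; and any atom of $d\psi$ sits at a jump of $\psi$, hence at a jump of $f$, i.e. at a corner of $F$, which lies in $\mathcal{N}^c$. Therefore $\int_{\mathcal{N}}|d\psi|=0$ (the finite-total-variation hypothesis in (A4) makes all these Lebesgue--Stieltjes integrals, and the associated linear functional on $\ell^\infty(\R_+)$, well defined and continuous). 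Next, if $x\notin\mathcal{N}$ then no open interval containing $x$ is affine for $F$, so in the notation of Proposition~\ref{prop:BF} (applied at $\theta=F$) we have $U_{F,x}=\emptyset$ and $T_{F,x}=\{x\}$, whence $\hat{\mathbb{G}}(x)=\M_F'\mathbb{G}(x)=\M_{\{x\}}\mathbb{G}(x)=\mathbb{G}(x)$. Since $\hat{\mathbb{G}}\in\ell^\infty(\R_+)$ a.s., combining the two observations yields
\[
Y\ =\ -\int_{\mathcal{N}^c}\hat{\mathbb{G}}\,d\psi\ =\ -\int_{\mathcal{N}^c}\mathbb{G}\,d\psi\ =\ -\int_0^\infty \mathbb{G}(x)\,d[h'(f(x))].
\]

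The right-hand side is a continuous linear functional of the Gaussian process $\mathbb{G}=\mathbb{B}\circ F$, hence $Y$ is mean-zero Gaussian, and its variance can be computed exactly as in the proof of Theorem~\ref{thm:asymp_eff-2}: the distributional identity $-\int_0^\infty\mathbb{G}(x)\,d[\dot{g}(f(x),x)]\stackrel{d}{=}N(0,\mathrm{Var}(\dot{g}(f(X),X)))$ appearing there is a fact about $\mathbb{G}$ and the fixed function $\dot{g}(f(\cdot),\cdot)$ only, and does not use strict concavity of $F$. Concretely, integrating by parts --- the boundary terms vanish since $\mathbb{G}(0)=\mathbb{B}(F(0))=\mathbb{B}(0)=0$ and $\mathbb{G}(\infty)=\mathbb{B}(1)=0$ --- gives $Y=\int_0^\infty\psi(x)\,d\mathbb{G}(x)$, and then $\mathrm{Cov}(\mathbb{G}(s),\mathbb{G}(t))=F(s\wedge t)-F(s)F(t)$ yields $\mathrm{Var}(Y)=P[\psi^2]-(P[\psi])^2=\mathrm{Var}(h'(f(X)))=\sigma^2_{\mathrm{eff}}(h,f)$, which is finite because $f$ is bounded by (A1) and $h'$ is continuous, so $\psi$ is bounded. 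This proves \eqref{eq:Eff_mu}.

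The main obstacle is the reduction in the second paragraph: carefully verifying that the support of $d[h'(f(\cdot))]$ is disjoint from the open set where $F$ is locally affine, so that the (possibly highly non-linear and non-Gaussian) behaviour of $\hat{\mathbb{G}}$ on the flat stretches of $F$ is never integrated against, and that endpoints and atoms of $d\psi$ are accounted for correctly. This is exactly the place where the hypothesis that $g$ depends on $z$ alone is essential --- for a general $g(z,x)$, e.g. a weighted linear functional $\int w(x)f(x)\,dx$, the integrand $\dot{g}(f(x),x)=w(x)$ does vary across the flat stretches of $f$, and the limit then genuinely involves $\M_{(t_{i-1},t_i)}\mathbb{G}$, which need not be Gaussian. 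Once the identity $Y=-\int_0^\infty\mathbb{G}\,d[h'(f)]$ is in hand, everything else is the routine Gaussian computation already carried out for Theorem~\ref{thm:asymp_eff-2}.
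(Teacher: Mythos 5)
Your proof is correct and takes essentially the same route as the paper: it reduces $\hat{\mathbb{G}}$ to $\mathbb{G}$ by exploiting that $\psi=h'\circ f$ has no variation on the open set where $F$ is locally affine (so the departure of $\hat{\mathbb{G}}$ from $\mathbb{G}$ is never integrated against $d\psi$), and then performs the same Gaussian variance computation as in Theorem~\ref{thm:asymp_eff-2}. The paper packages the first step slightly differently --- it decomposes the affine set into a countable union of maximal intervals $T_{F,x_i}$ (using a rational-slope argument for countability) and shows each contribution $\int_{T_{F,x_i}}|\M_{T_{F,x_i}}\xi-\xi|\,d[\psi(f)]$ vanishes, which is logically equivalent to your observation that $|d\psi|(\mathcal{N})=0$.
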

A few remarks about the implications of  Theorem~\ref{thm:asymp_eff_mu} are in order now. 

\begin{remark}[Assumptions in Theorem~\ref{thm:asymp_eff_mu}]
Assumption (A4) in Theorem~\ref{thm:asymp_eff_mu} is implied by assumption (A3) where one considers $g(z,x)=h(z)$ with $h\in C^2([0,\infty))$. Thus, the $\sqrt{n}$-consistency and existence of asymptotic distribution for the plug-in principle is automatic from Theorem \ref{thm:asymp_eff}. However, unlike Theorem \ref{thm:asymp_eff-2}, the requirement of $F$ being strictly concave is no longer necessary for the validity of Theorem \ref{thm:asymp_eff_mu}. 
\end{remark}

\begin{remark}[Asymptotic efficiency] 
Observe that the limiting variance in Theorem~\ref{thm:asymp_eff_mu}  ({see~\eqref{eq:Eff_mu}}) matches the nonparametric efficiency bound in this problem (\cite{bickel1993efficient}, ~\cite{van2000asymptotic},~\cite{bolthausen2002lectures},~\cite{laurent1996efficient}). Indeed, this efficiency bound corresponds to the case when the maximal  tangent space in the model is the whole of $L_2(F)$. Although such a bound can be shown to hold for models where $f$ has a lower bound on its slope (in absolute value) and is compactly supported, it is not immediate for more general monotone densities. Consequently, the statement that the ``plug-in estimator $\mu(h,\hat{f}_n)$ is asymptotically efficient for any concave $F$ (under assumptions (A1), (A2), and (A4))" should be understood in the sense of simply achieving the efficiency bound in the nonparametric model. When the tangent space is  $L_2(F)$, this is indeed the semiparametric efficiency bound.
\end{remark}

\begin{remark}[Monomial functionals] 
Of special interest is the case when $h(x) = x^{p}$, where $p\geq 2$ is an integer. For example, the quadratic functional $\int f^2$ is obtained when $p=2$. In this case, Theorem~\ref{thm:asymp_eff_mu} yields an asymptotically efficient variance of
		\begin{equation}\label{eq:sigma_p}\sigma^2_{\mathrm{eff}}(h,f) =\mathrm{Var}_P\left(pf(X)^{p-1}\right)=p^2\left(\int_0^\infty f^{2p-1}(x)dx-\left(\int_0^\infty f^{p}(x)dx \right)^2\right),
\end{equation}
for any concave $F$. 
\end{remark}

\begin{remark}[Construction of confidence intervals]  
As an important consequence of Theorem~\ref{thm:asymp_eff_mu} one can construct asymptotically valid confidence intervals for the functional $\mu(h,f)$. In particular, one can estimate the efficient asymptotic variance $\sigma^2_{\mathrm{eff}}$ as follows. Note that, $$\sigma^2_{\mathrm{eff}}(h,f)= \mathrm{Var}\Big(h'(f(X))\Big)=\mu(h_1,f)-(\mu(h_2,f))^2 $$ where $h_1(z):=z (h'(z))^2$ and $h_2(z):=z h'(z)$ for $z \ge 0$. As a result, if $h\in C^3([0,\infty))$ then $h_1,h_2\in C^2([0,\infty))$ and Theorem~\ref{thm:asymp_eff_mu} implies that $\hat{\sigma}^2_n:=\mu(h_1,\hat{f}_n)-(\mu(h_2, \hat{f}_n))^2 $ is a consistent estimator of $\sigma^2_{\mathrm{eff}}(h,f)$. Consequently $\mu(h,\fhat)\pm z_{\alpha/2}\hat{\sigma}_n$ (with $z_{\alpha/2}$ being the $1-\alpha/2$ quantile of the standard normal distribution) is an asymptotically valid $100\times (1-\alpha)\%$ confidence interval for $\mu(h,f)$. Indeed, the additional assumption of $h\in C^3([0,\infty))$ compared to $h\in C^2([0,\infty))$ in Theorem~\ref{thm:asymp_eff_mu} can be reduced since we only demand consistency of $\hat \sigma_n$, instead of asymptotic normality.  We omit the details for the sake of avoiding repetitions. Finally, the validity of this confidence interval is pointwise (true for every fixed underlying $f$) and not in a uniform sense. 
\end{remark}

\begin{remark}[Uniform distribution]\label{rem:Unif} If $f$ is the uniform distribution on $[0,c]$, for any $c>0$, Theorem~\ref{thm:asymp_eff-2} implies that $\sqrt{n}(\mu(h,\hat{f}_n)-\mu(h,f)) \stackrel{\P}{\to} 0$, as $\sigma^2_{\mathrm{eff}}(h,f) = 0$ in~\eqref{eq:Eff_mu}. In Section~\ref{section:uniform} below we deal with this scenario (i.e., when $f$ is uniform) and obtain a non-degenerate distributional limit, after proper normalization.
\end{remark}

\subsection{When $f$ is uniformly distributed}\label{section:uniform}
As mentioned in Remark~\ref{rem:Unif}, Theorem~\ref{thm:asymp_eff_mu} does not give a non-degenerate limit when $f$ is the uniform distribution on the interval $[0,c]$, for any $c >0$. Indeed, as we will see, in such a case $\mu(h,\hat f_n)$ has a faster rate of convergence. In this subsection we focus our attention to the case when $f$ is the uniform distribution on $[0,1]$; an obvious scaling argument can then be used to generalize the result to the case when $f$ is uniform on $[0,c]$, for any $c >0$.

Suppose that $P$ is the uniform distribution on $[0,1]$, i.e., $f(x) = \mathbf{1}_{[0,1]}(x)$, $x \in \R$. It has been shown in~\cite{GP83} (also see~\cite{G83}) that in this case 
\be\label{eq:Unif-2}
\frac{n \int_0^1 \big(\hat f_n^2(x) - 1\big) dx - \log n}{\sqrt{3 \log n}} \stackrel{d}{\to} N(0,1).
\ee
The above result yields the asymptotic distribution for the plug-in estimator of the quadratic functional (as $\int f^2(x) dx = 1$), properly normalized. In the following theorem we extend the above result to general smooth functionals of the Grenander estimator. 
\begin{theorem}\label{theorem:uniform}
Let $X_1,\ldots,X_n$ be {i.i.d.}~Uniform$([0,1])$. Suppose that $h\in C^4([0,\infty))$, where $C^4([0,\infty))$ is the space of all four times continuously differentiable functions on $[0,\infty)$. Then
\be \label{eq:Unif-Lim}
%\frac{n\left(\mu(h,\hat{f}_n)-\mu(h,f)\right)-\frac{1}{2}\big(h''(1)+2h'(1)\big)\log{n}}{\sqrt{\frac{3}{4}\big(h''(1)+2'h(1)\big)^2\log{n}}}\stackrel{d}{\rightarrow} N(0,1). 
\frac{n\left(\mu(h,\hat{f}_n)-\mu(h,f)\right)-\frac{1}{2}h''(1)\log{n}}{\sqrt{3[\frac{1}{2}h''(1)]^2\log{n}}} \stackrel{d}{\rightarrow} N(0,1). 
\ee
\end{theorem}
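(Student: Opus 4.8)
The plan is to reduce the problem to the known result~\eqref{eq:Unif-2} of \citet{GP83} by a second-order Taylor expansion of $h$ around the constant value $1 = f(x)$ on $[0,1]$, and to control the remainder terms using sharp known bounds on the $L_p$-deviations of the Grenander estimator from the uniform density. Concretely, on the event that $\hat f_n$ is supported in a neighborhood of $[0,1]$ (which holds with probability tending to one, since $\max_i X_i \to 1$ and $\hat f_n$ vanishes beyond $\max_i X_i$), write
\be\label{eq:unif-taylor}
\mu(h,\hat f_n) - \mu(h,f) = \int_0^1 \big[h(\hat f_n(x)) - h(1)\big]\,dx + \int_1^{\max_i X_i} h(\hat f_n(x))\,dx.
\ee
Expanding the first integrand as $h(\hat f_n(x)) - h(1) = h'(1)(\hat f_n(x) - 1) + \tfrac12 h''(1)(\hat f_n(x)-1)^2 + R(x)$, where $|R(x)| \le \tfrac16 \|h'''\|_{\infty,[0,B]} |\hat f_n(x)-1|^3$ on the (high-probability) event $\{\|\hat f_n\|_\infty \le B\}$, the leading behavior will come from the quadratic term $\tfrac12 h''(1)\int_0^1 (\hat f_n(x)-1)^2\,dx$, which by~\eqref{eq:Unif-2} satisfies $n\int_0^1(\hat f_n(x)-1)^2 dx = \log n + \sqrt{3\log n}\,Z_n + o_{\p}(\sqrt{\log n})$ with $Z_n \stackrel{d}{\to} N(0,1)$.

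The main work is then to show that all the other terms in~\eqref{eq:unif-taylor} are $o_{\p}(\sqrt{\log n}/n)$, so that they do not affect the limit after dividing by $\sqrt{3[\tfrac12 h''(1)]^2 \log n}/n$. First, the linear term: $\int_0^1 (\hat f_n(x) - 1)\,dx = \hat F_n(1) - 1 = \F_n(1) - 1 - (\hat F_n(1) - \F_n(1))$; since $\F_n(1) = 1$ a.s. (all data lie in $[0,1]$) and $\hat F_n$ is the LCM of $\F_n$ with $\hat F_n(1) \le 1$, one has $0 \le 1 - \hat F_n(1) \le 1 - \F_n(\max_i X_i^-)$-type bounds; in fact $1 - \hat F_n(1) = O_{\p}(n^{-1}\log n)$ or smaller (it is controlled by the gap between the largest observation and $1$ and the spacing structure near the right endpoint), which is $o_{\p}(\sqrt{\log n}/n)$. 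Second, the cubic remainder: by the moment bound $\E\int_0^1 |\hat f_n(x) - 1|^3\,dx = O(n^{-3/2}(\log n)^{?})$ — more crudely, $\int_0^1|\hat f_n - 1|^3 \le \|\hat f_n - 1\|_\infty \int_0^1 (\hat f_n-1)^2$, and $\|\hat f_n - 1\|_\infty = O_{\p}(\sqrt{\log n/n})$ for the uniform density (local behavior of the Grenander estimator, e.g.\ via the switching relation and known modulus-of-continuity results) — one gets $\int_0^1 |\hat f_n - 1|^3 = O_{\p}((\log n)^{3/2} n^{-3/2}) = o_{\p}(\sqrt{\log n}/n)$. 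Third, the boundary term $\int_1^{\max_i X_i} h(\hat f_n(x))\,dx$: here $\hat f_n$ is bounded on a set of length $\max_i X_i - 1 = O_{\p}(n^{-1}\log n)$ (actually $O_{\p}(n^{-1})$ for the right-endpoint spacing of a uniform sample, up to logs), so this term is $O_{\p}(n^{-1}\log n) = o_{\p}(\sqrt{\log n}/n)$.

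Assembling these pieces gives
\be\label{eq:unif-assemble}
n\big(\mu(h,\hat f_n) - \mu(h,f)\big) = \tfrac12 h''(1)\,\big(\log n + \sqrt{3\log n}\,Z_n\big) + o_{\p}(\sqrt{\log n}),
\ee
and dividing by $\sqrt{3[\tfrac12 h''(1)]^2\log n}$ (valid since $h''(1) \ne 0$ is implicitly needed for a nondegenerate statement; if $h''(1)=0$ the normalization degenerates and the claim is vacuous) and applying Slutsky yields~\eqref{eq:Unif-Lim}. The hypothesis $h \in C^4$ is more than enough for the $C^3$-type remainder control above; the extra derivative presumably gives uniform control of $h'''$ and $h''''$ on the relevant compact range of $\hat f_n$ and streamlines the moment estimates, but the essential input is the Groeneboom--Pyke expansion~\eqref{eq:Unif-2} together with the $\ell^\infty$ and $L^3$ deviation rates of $\hat f_n$ near the uniform density.

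\textbf{Main obstacle.} The delicate step is the sharp control of the cubic remainder $\int_0^1 |\hat f_n(x) - 1|^3\,dx$ at the rate $o_{\p}(n^{-1}\sqrt{\log n})$: a naive $L^\infty$ bound on $\|\hat f_n - 1\|_\infty$ alone costs a factor $\sqrt{\log n/n}$ which, multiplied against the $L^2$-norm squared of order $\log n/n$, gives exactly $(\log n)^{3/2}/n^{3/2}$ — safely $o(\sqrt{\log n}/n)$, but only because the global sup-norm rate for the Grenander estimator of the uniform density is genuinely $\sqrt{\log n / n}$ rather than something larger; one must invoke (or re-derive, perhaps via the Kiefer--Wolfowitz or switching-relation machinery specialized to the flat density, cf.\ the discussion around~\eqref{eq:PW-Cons}) this precise modulus bound, and similarly pin down the right-endpoint behavior $\max_i X_i - 1$. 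Everything else is bookkeeping around the already-available result~\eqref{eq:Unif-2}.
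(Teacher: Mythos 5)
Your overall strategy is the same as the paper's: Taylor-expand $h$ around $1$, feed the quadratic term to the Groeneboom--Pyke result~\eqref{eq:Unif-2}, and show the other terms are $o_{\P}(\sqrt{\log n}/n)$. However, the way you dispose of the higher-order remainder has a genuine gap, and it occurs precisely at the step you yourself flag as delicate.

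The claim $\|\hat f_n - 1\|_\infty = O_{\P}(\sqrt{\log n/n})$ is false. The Grenander estimator vanishes for $x > \max_i X_i$, and $\max_i X_i < 1$ almost surely, so $|\hat f_n(x)-1| = 1$ on the nonempty interval $(\max_i X_i, 1]$; hence $\|\hat f_n - 1\|_{\infty,[0,1]} \ge 1$ for every $n$. There is also the left-endpoint spiking problem: $\hat f_n(0+)$ has a non-degenerate limit strictly above $1$ (the phenomenon behind the Woodroofe--Sun reference used elsewhere in the paper). So the correct rate is $\|\hat f_n - 1\|_\infty = O_{\P}(1)$, bounded away from zero, and your H\"older step $\int_0^1 |\hat f_n - 1|^3\,dx \le \|\hat f_n - 1\|_\infty \int_0^1 (\hat f_n - 1)^2\,dx$ only gives $O_{\P}(\log n / n)$ --- a factor $\sqrt{\log n}$ too large compared with the required $o_{\P}(\sqrt{\log n}/n)$, so the argument does not close. (A minor aside: the linear term $\int_0^1 (\hat f_n - 1)\,dx = \hat F_n(1) - 1$ is identically $0$, since $\F_n(1)=1$ a.s.\ and therefore $\hat F_n(1)=1$; your bound $O_{\P}(n^{-1}\log n)$ is unnecessarily indirect, and your display~\eqref{eq:unif-taylor} with its $\int_1^{\max_i X_i}$ term misstates the decomposition.) The paper avoids the sup-norm route entirely: it expands to \emph{fourth} order (hence the hypothesis $h \in C^4$), keeping the cubic term $\tfrac{1}{6}h^{(3)}(1)\int_0^1(\hat f_n - 1)^3\,dx$ explicit with a Lagrange quartic remainder, and then proves directly that $n\int_0^1 (\hat f_n - 1)^k\,dx = o_{\P}(\sqrt{\log n})$ for $k = 3,4$ (Lemmas~\ref{lemma:asymp_joint_distr_uvw_4} and~\ref{lemma:asymp_joint_distr_uvw_5}). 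These estimates rest on the Groeneboom--Pyke Poisson/Gamma exponential-spacings representation of $\hat f_n$ under the uniform (conditioning on $T_n = n$, $S_n = n$) and explicit mean and variance calculations. The cancellations that make $\int_0^1(\hat f_n - 1)^3\,dx$ genuinely an order of magnitude smaller than its crude H\"older bound are the real content of the proof, and they cannot be detected by a sup-norm $\times$ $L^2$-norm estimate.
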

%\begin{remark}
%The proof of Theorem \ref{theorem:uniform} above shows that the result remains valid for estimating a more general functional of the form $\int g(f(x))dx$, for $g\in C^4(\R_+)$, instead of functionals of the form $\nu(h,f):=\int h(f(x))f(x)dx$. However, for the sake of consistency, for the rest of the paper we only present the simpler form of the result here.
%\end{remark}
The proof of the above result is given in Section~\ref{pf:Thm-Unif} and follows closely the line of argument presented in~\cite{GP83}. Indeed, as is somewhat apparent from the statement of the theorem, the proof relies on a Taylor expansion of  $h$ and thereafter controlling the error terms. In particular, we prove that  
$$ 
\frac{n\int_0^1 (\fhat(x)-1)^k dx}{\sqrt{\log{n}}}=o_{\P}(1), \quad k=3,4.$$ and then a simple three term Taylor expansion yields the desired result.

\section{Other Efficient Estimators of $\nu(h,f)$}\label{section:nu_hf}
In this section we focus on the special case of estimating the functional $\nu(h,f)$ (as defined in~\eqref{def:functional-2}) where $h:[0,\infty) \to \R$ is assumed to be a known function and consider other (efficient) estimators of $\nu(h,f)$. Our plug-in estimator of $\nu(h,f)$ is 
\begin{equation}\label{Eq:Est}
\nu(h, \hat f_n) := \int h(\hat f_n(x)) \hat f_n(x) dx = 
\hat P_n[h\circ \hat f_n],
\end{equation} 
where $\hat P_n$ denotes the probability measure associated with the Grenander estimator (i.e., $\hat P_n$ has distribution function $\hat F_n$). 

We first contrast the above plug-in estimator with two other natural estimators and argue that the special structure of the Grenander estimator implies their equivalence. Observe that another natural estimator of $\nu(h,f) \equiv P[h \circ f]$ would be $$\P_n[h \circ \hat f_n] := \frac{1}{n} \sum_{i=1}^n h(\hat f_n(X_i)).$$
Indeed, Lemma \ref{lem:Basic} below shows that this natural estimator is exactly the same as the plug-in estimator $\hat P_n[h\circ \hat f_n]$. 
\begin{lemma}\label{lem:Basic} For any function $h:\R \to \R$,
$\hat P_n[h\circ \hat f_n] = \P_n[h \circ \hat f_n]$.
\end{lemma}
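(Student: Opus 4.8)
The plan is to exploit the explicit piecewise-linear structure of $\hat F_n$: I will evaluate each side of the claimed identity and check that both reduce to the same weighted sum over the segments on which the Grenander estimator is constant. Since $P$ has a density, with probability one the observations $X_1,\dots,X_n$ are distinct and strictly positive, and this is the only configuration that needs to be treated.

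First I would fix notation for the Grenander construction. The LCM $\hat F_n$ of $\F_n$ is a continuous, concave, piecewise-linear function on $[0,X_{(n)}]$ with vertices $0 = a_0 < a_1 < \dots < a_m = X_{(n)}$, each lying in $\{0,X_1,\dots,X_n\}$ and satisfying $\hat F_n(a_k)=\F_n(a_k)$ for every $k$; on $(a_{k-1},a_k)$ the function $\hat F_n$ is affine with (strictly decreasing, positive) slope $v_k := \big(\hat F_n(a_k)-\hat F_n(a_{k-1})\big)/(a_k-a_{k-1})$. As $\hat f_n$ is the left-hand slope of $\hat F_n$, we have $\hat f_n(x)=v_k$ for $x\in(a_{k-1},a_k]$ and $\hat f_n(x)=0$ for $x>X_{(n)}$, so the half-open intervals $(a_0,a_1],\dots,(a_{m-1},a_m]$ partition $(0,X_{(n)}]$, which contains every observation.

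Next I would compute the plug-in side. Since $\hat f_n$ is constant on each $(a_{k-1},a_k]$,
\[
\hat P_n[h\circ\hat f_n]=\int_0^\infty h(\hat f_n(x))\,\hat f_n(x)\,dx=\sum_{k=1}^m h(v_k)\,v_k\,(a_k-a_{k-1})=\sum_{k=1}^m h(v_k)\big(\F_n(a_k)-\F_n(a_{k-1})\big),
\]
using in the last step that the vertices lie on $\F_n$, so $v_k(a_k-a_{k-1})=\hat F_n(a_k)-\hat F_n(a_{k-1})=\F_n(a_k)-\F_n(a_{k-1})$; and since $\F_n$ is right-continuous this equals $\frac1n\sum_{k=1}^m h(v_k)\,\#\{i:a_{k-1}<X_i\le a_k\}$. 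For the other side, grouping the observations according to the partition $\{(a_{k-1},a_k]\}_{k=1}^m$ and using $\hat f_n\equiv v_k$ there gives
\[
\P_n[h\circ\hat f_n]=\frac1n\sum_{i=1}^n h(\hat f_n(X_i))=\frac1n\sum_{k=1}^m h(v_k)\,\#\{i:a_{k-1}<X_i\le a_k\},
\]
which coincides with the expression just obtained for $\hat P_n[h\circ\hat f_n]$, proving the lemma; note that no property of $h$ is used. The only point requiring care --- hence the main (if modest) obstacle --- is consistency of conventions: one must pair the left-continuous version of $\hat f_n$ with the right-continuous $\F_n$ so that each interval $(a_{k-1},a_k]$ carries matching mass $v_k(a_k-a_{k-1})$ under $\hat P_n$ and a matching observation count under $\P_n$.
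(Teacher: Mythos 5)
Your proof is correct, and it takes a genuinely different route from the paper's. The paper appeals to the variational characterization of the Grenander estimator: viewing $\hat\theta=(\hat f_n(X_{(1)}),\dots,\hat f_n(X_{(n)}))$ as the weighted least-squares projection of the naive slopes onto the monotone cone $\C$, it invokes the standard cone-projection identity (\citet[Theorem 1.3.6]{RWD88}) stating that the weighted residuals are orthogonal to any function $h(\hat\theta_i)$ of the fitted values, which after rearranging is exactly the claimed equality $\P_n[h\circ\hat f_n]=\hat P_n[h\circ\hat f_n]$. You instead argue directly from the geometry of the LCM: you use the fact that $\hat F_n$ touches $\F_n$ at its vertices $a_0<\dots<a_m$, so the $\hat P_n$-mass of each constancy interval $(a_{k-1},a_k]$, namely $v_k(a_k-a_{k-1})=\hat F_n(a_k)-\hat F_n(a_{k-1})$, coincides with its $\P_n$-mass $\F_n(a_k)-\F_n(a_{k-1})$; since $\hat f_n$ is constant on each such interval, both integrals collapse to the same finite sum. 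Your argument is more elementary and self-contained (it avoids importing the isotonic-projection machinery and makes the ``why'' visible: the LCM and the empirical CDF assign the same mass to each flat piece), at the cost of some bookkeeping about vertex locations and left/right-continuity conventions that you correctly flag. The paper's argument is shorter once one grants the cone-projection lemma and would transfer more readily to other order-restricted estimation settings where the same orthogonality relation holds.
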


\begin{remark}[One-step estimator]\label{rem:One-Step}
There is however another interesting by-product of Lemma \ref{lem:Basic}: The plug-in estimator $\hat f_n$ is also equivalent to the classically studied one-step estimator which is traditionally efficient for estimating integrated functionals, such as $\nu(h,f)$, over smoothness classes for $f$; see e.g.,~\cite{bickel1993efficient},~\cite{van2000asymptotic},~\cite{bolthausen2002lectures}. In particular, a first order influence function of $\nu(h,f)$ at $P$ is $P[h \circ f+f (h' \circ f)]$ and consequently a one-step estimator obtained as a bias-corrected version of a $\tilde{f}$-based plug-in estimator (here $\tilde{f}$ is any estimator of $f$) is given by %(see e.g.~\cite{bickel1993efficient},~\cite{van2000asymptotic},~\cite{bolthausen2002lectures})
\begin{align*}
\tilde{\nu} :=\nu(h,\tilde{f})+\P_n\left[h \circ \tilde f +\tilde{f}( h' \circ \tilde f)-\tilde{P}[h \circ \tilde f +\tilde f (h'\circ \tilde  f) ]\right]
\end{align*}
where $\tilde{P}$ denotes the probability measure associated with $\tilde{f}$.
However, Lemma \ref{lem:Basic} implies that when $\tilde{f}=\fhat$ (the Grenander estimator)
$$\P_n\left[h \circ \tilde f +\tilde{f}(h' \circ \tilde f)-\tilde{P}[h \circ \tilde f +\tilde f(h' \circ \tilde f)]\right]=0 \qquad \Rightarrow \quad \tilde{\nu}=\nu(h,\fhat).$$ 
Consequently, in this case of estimating a monotone density, the first order influence function based one-step estimator, obtained as a bias-corrected version of a Grenander-based plug-in estimator, coincides with the simple plug-in estimator~\eqref{Eq:Est}. 
\end{remark}

Since all the three intuitive estimators in this problem turn out to be equivalent, one can expect them to be efficient as well (in a semiparametric sense). Indeed this is the case. The following result, an immediate consequence of Theorem~\ref{thm:asymp_eff_mu}, shows this.

\begin{corollary}\label{cor:asymp_eff_nu} 
	Assume that conditions (A1)-(A2) and (A4) hold. %Let  $g(z,x)=h(z)z$, for $(z,x) \in\R^2$. 
	Then, 
\begin{equation}\label{eq:Eff_nu}
	\sqrt{n}\left(\nu(h,\hat{f}_n)-\nu(h,f)\right) \stackrel{d}{\to}
	N(0,\sigma^2_{\mathrm{eff}}(h,f)),
\end{equation}
	where, for $X \sim P$, $\sigma^2_{\mathrm{eff}}(h,f) := \mathrm{Var}\Big(h'(f(X))f(X)+h(f(X))\Big) <\infty$.
\end{corollary}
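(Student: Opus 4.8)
The plan is to obtain this as a direct specialization of Theorem~\ref{thm:asymp_eff_mu}. The functional $\nu(h,f) = \int_0^\infty h(f(x)) f(x)\,dx$ is of the form $\mu(\tilde h, f) = \int_0^\infty \tilde h(f(x))\,dx$ with $\tilde h(z) := z\, h(z)$. So the first step is to verify that $\tilde h$ inherits the hypotheses needed to invoke Theorem~\ref{thm:asymp_eff_mu}. Since $h \in C^2([0,\infty))$ (part of (A4) applied to $h$), the product $\tilde h(z) = z h(z)$ is also in $C^2([0,\infty))$, with $\tilde h'(z) = h(z) + z h'(z)$ and $\tilde h''(z) = 2 h'(z) + z h''(z)$. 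One then checks the bounded-variation requirement: $\tilde h'(f(\cdot)) = h(f(\cdot)) + f(\cdot)\, h'(f(\cdot))$ must have finite total variation on the support $\s$. Here I would use that $h'(f(\cdot))$ has finite variation by (A4), that $f$ is monotone (hence of finite variation, with total variation bounded by $f(0+) < \infty$ by (A1)), that $h(f(\cdot)) = h(f(0+)) + \int h'(f)\,df$ so $h \circ f$ has finite variation as well (composition of the $C^1$ function $h$ with the monotone bounded $f$, or directly via the product/chain rule for Lebesgue--Stieltjes integrals), and that products of bounded functions of finite variation have finite variation. Thus $\tilde h$ satisfies (A4), and Theorem~\ref{thm:asymp_eff_mu} applies with $h$ replaced by $\tilde h$.

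Applying Theorem~\ref{thm:asymp_eff_mu} immediately gives
\begin{equation*}
\sqrt{n}\big(\nu(h,\hat f_n) - \nu(h,f)\big) = \sqrt{n}\big(\mu(\tilde h,\hat f_n) - \mu(\tilde h,f)\big) \stackrel{d}{\to} N\big(0, \mathrm{Var}(\tilde h'(f(X)))\big),
\end{equation*}
and substituting $\tilde h'(z) = h(z) + z h'(z)$ yields $\mathrm{Var}\big(h(f(X)) + f(X) h'(f(X))\big)$, which is exactly $\sigma^2_{\mathrm{eff}}(h,f)$ as defined in the statement. The finiteness of this variance follows from the finiteness asserted in Theorem~\ref{thm:asymp_eff_mu} for $\tilde h$, or can be seen directly: $\mathrm{Var}(\tilde h'(f(X))) \le P[(\tilde h'(f))^2]$, and on $\{f \ge 1\}$ the integrand is bounded (since $f$ is bounded by (A1) and $\tilde h'$ is continuous), while on $\{f < 1\}$ one controls $\int_{f<1} (\tilde h'(f))^2 f\,dx$ using $\tilde h'(0) = h(0)$ and a Taylor bound $|\tilde h'(z) - \tilde h'(0)| \le C z$ for $z \in [0,1]$, together with tail condition (A2).

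The only genuinely nontrivial step is the verification that $\tilde h'(f(\cdot)) = h(f(\cdot)) + f(\cdot) h'(f(\cdot))$ has finite total variation on $\s$; everything else is bookkeeping. The mild subtlety there is that (A4) for the original $h$ only directly gives finite variation of $h'(f(\cdot))$, not of $h(f(\cdot))$, so one must argue the latter separately --- but this is easy because $h$ is $C^1$ and $f$ is monotone and bounded, so $h \circ f$ is itself monotone on each maximal interval where $h'$ does not change sign, and more simply $\mathrm{TV}_\s(h \circ f) \le \sup_{z \in [0,f(0+)]} |h'(z)| \cdot \mathrm{TV}_\s(f) < \infty$. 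With that in hand the corollary is an immediate corollary as claimed. I would also remark that, combined with Lemma~\ref{lem:Basic} and Remark~\ref{rem:One-Step}, this shows the plug-in estimator, the empirical-average estimator $\P_n[h \circ \hat f_n]$, and the one-step bias-corrected estimator all share this same efficient limiting distribution.
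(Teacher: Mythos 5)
Your argument is essentially the paper's own: the paper simply states that the corollary is ``an immediate consequence of Theorem~\ref{thm:asymp_eff_mu}'' via the substitution $\tilde h(z) = z\,h(z)$, which is exactly what you do, with the added (useful) bookkeeping of verifying that $\tilde h$ satisfies (A4) and that $\mathrm{Var}(\tilde h'(f(X)))$ matches the stated $\sigma^2_{\mathrm{eff}}(h,f)$. Your proposal is correct and takes the same route.
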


\section{Simulation study}\label{section:numericals} 
In this section we illustrate the distributional convergence of our plug-in estimator $\tau(g,\hat f_n)$ (see~\eqref{eq:tau_Plugin}) for estimating $\tau(g,f)$. Let us consider the case of estimating the quadratic functional, i.e., $\tau(g,f) = \int f^2(x) dx $ which corresponds to $g(z,x) = z^2$. 

\begin{figure}[!ht]
	% \captionsetup[subfigure]{labelformat=empty}
	\centering
	\includegraphics[width=.325\textwidth, height= .2\textheight]{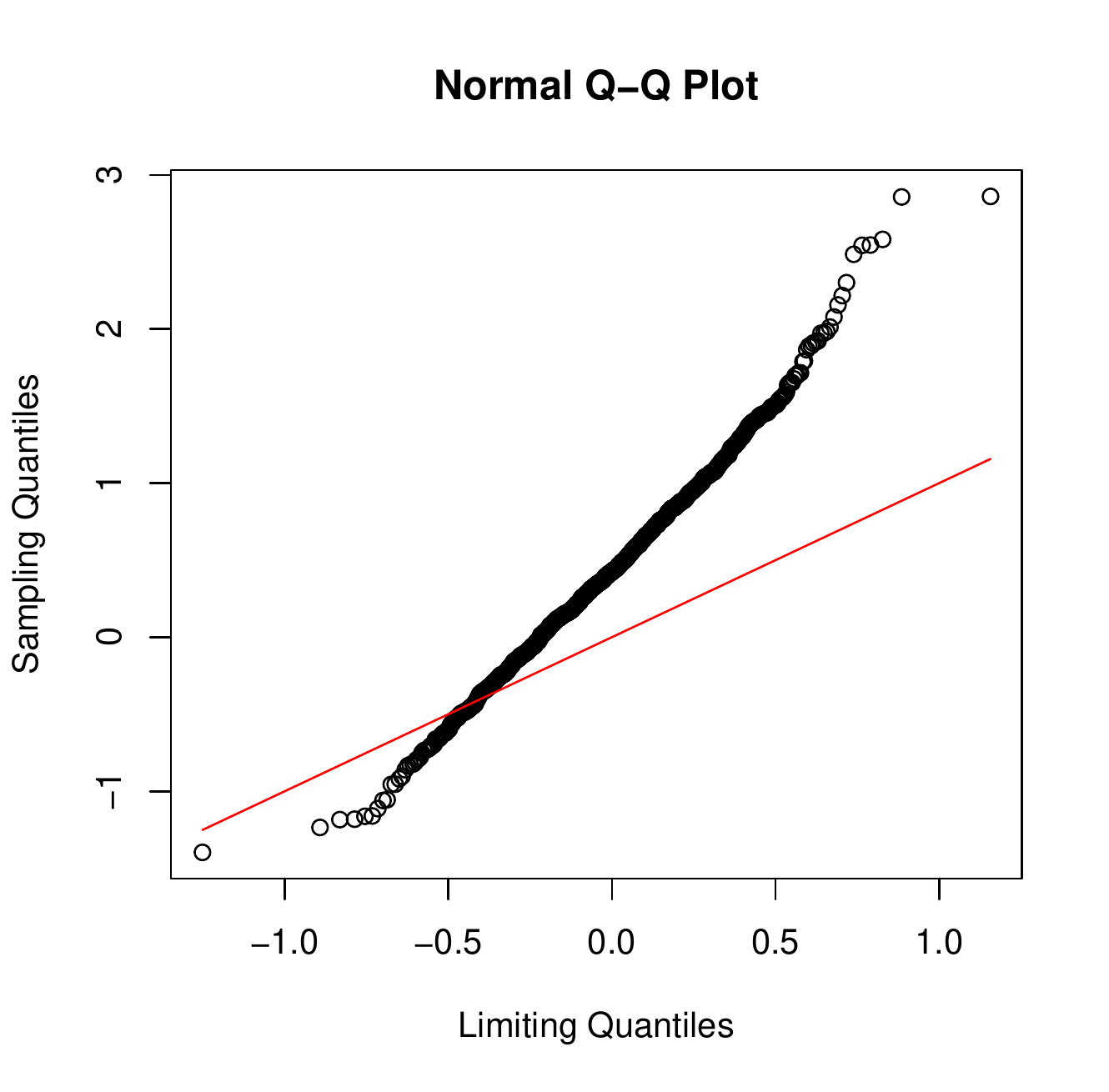}
	\includegraphics[width=.325\textwidth, height= .2\textheight]{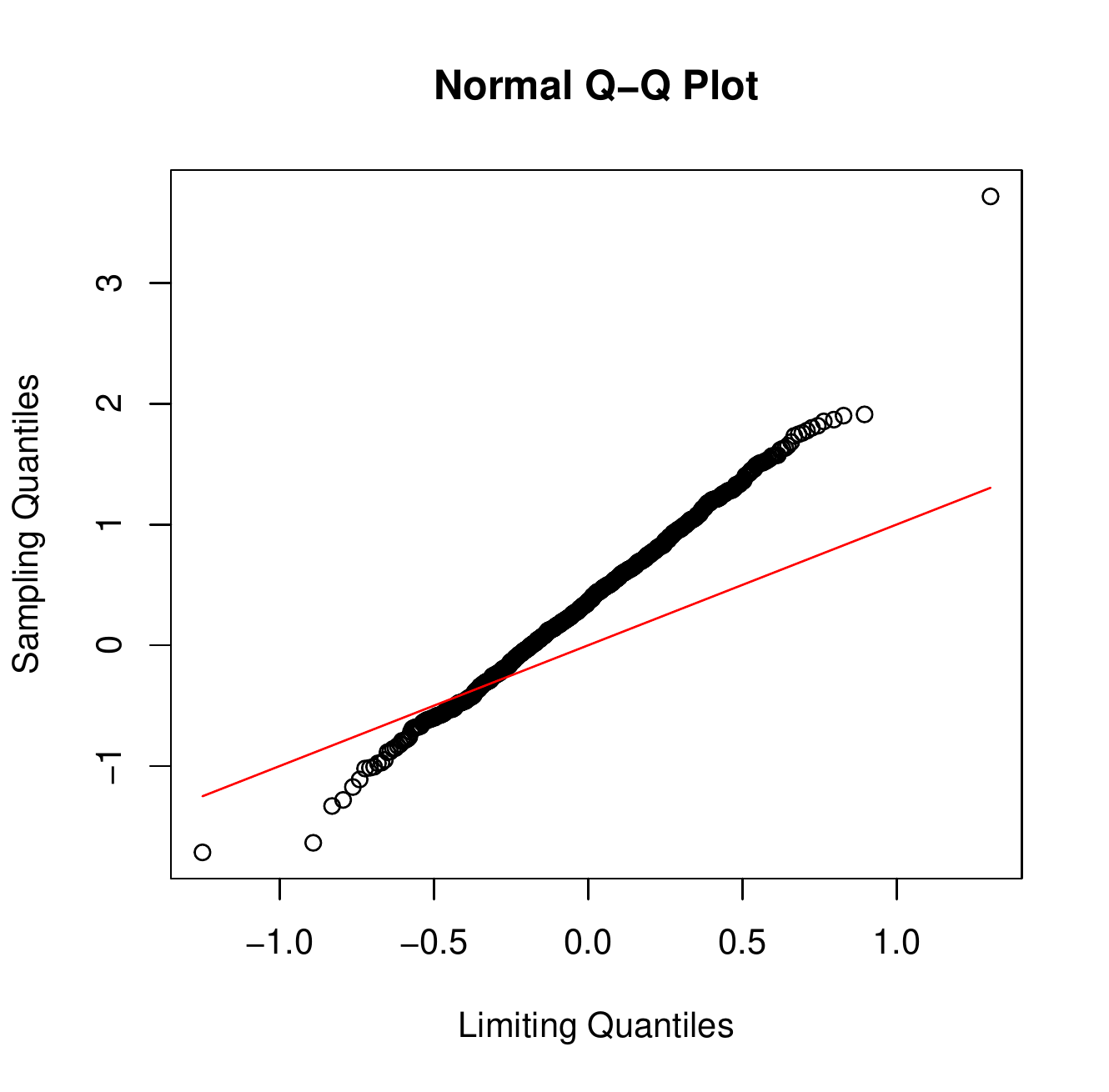}
	\includegraphics[width=.325\textwidth, height= .2\textheight]{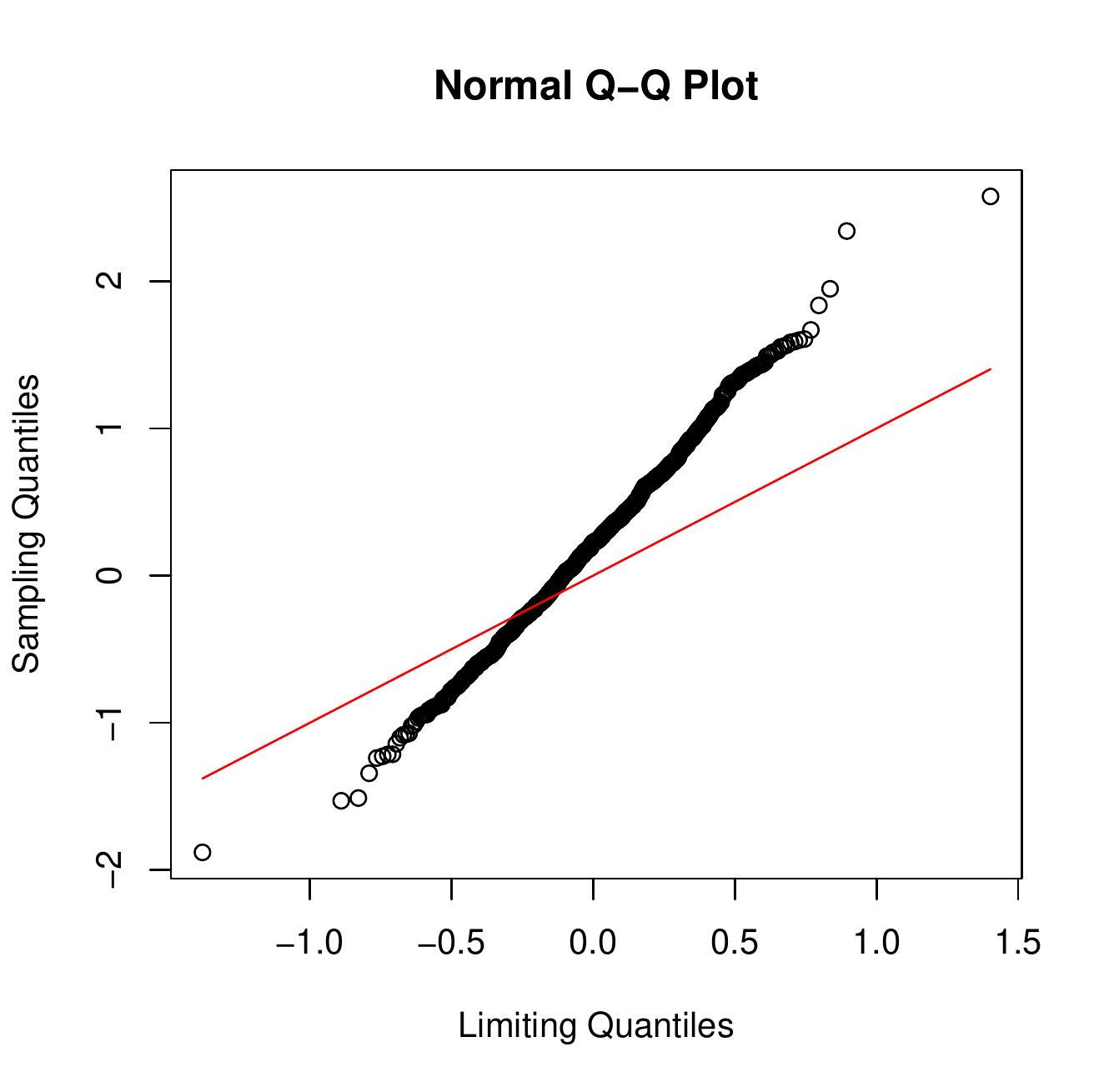}
	\caption{The Q-Q plots compare the sampling distribution of $\sqrt{n}\{\tau(g,\hat f_n) - \tau(g,f)\}$, for $g(z,x) = z^2$, against the limiting normal distribution $N(0, \sigma^2_{\mathrm{eff}}(f))$ (see Theorem~\ref{thm:asymp_eff-2}) when $F = $ Exponential(1). The three plots correspond to sample sizes $n=5000, 20000$, and 100000. Drawn in red is the $y=x$ line, which illustrates  the difference between the finite sample distribution and the limiting distribution.}
	\label{fig:Exp}
	%% label for entire figure
\end{figure}

We first consider the case when the true density $f$ is strictly concave and thus, the asymptotic distribution of $\tau(g,\hat f_n)$ is given by~\eqref{eq:Eff-2} (in Theorem~\ref{thm:asymp_eff-2}) where $\sigma^2_{\mathrm{eff}}(f)$ is given in~\eqref{eq:sigma_p} (with $p=2$). For the Q-Q plots in Figure~\ref{fig:Exp} we took $f$ to be the exponential density with parameter 1 for which $\tau(g,f) = 1/2$ and $\sigma^2_{\mathrm{eff}}(f) = 4/12$. The plots show the sampling distribution of $\sqrt{n}\{\tau(g,\hat f_n) - \tau(g,f)\}$ as the sample size increases ($n= 5000, 20000, 100000$). The sampling distributions is approximated from 1000 independent replications. The sampling distribution of $\sqrt{n}\{\tau(g,\hat f_n) - \tau(g,f)\}$, as $n$ increases, seems to converge to $N(0, \sigma^2_{\mathrm{eff}}(f))$, but even for moderate sample sizes there is a non-negligible bias. Although the Q-Q plots in Figure~\ref{fig:Exp} show a visible deviation between the sampling quantiles and the limiting quantiles, the quantiles lie approximately on a straight line suggesting that the sampling distribution is well-approximated by a normal (with a non-zero mean). Although these findings broadly corroborate the theoretical result in Theorem~\ref{thm:asymp_eff-2}, it is indeed true that the asymptotic regime seems to kick in quite slowly.
\begin{figure}[!ht]
	% \captionsetup[subfigure]{labelformat=empty}
	\centering
	\includegraphics[width=.325\textwidth, height= .2\textheight]{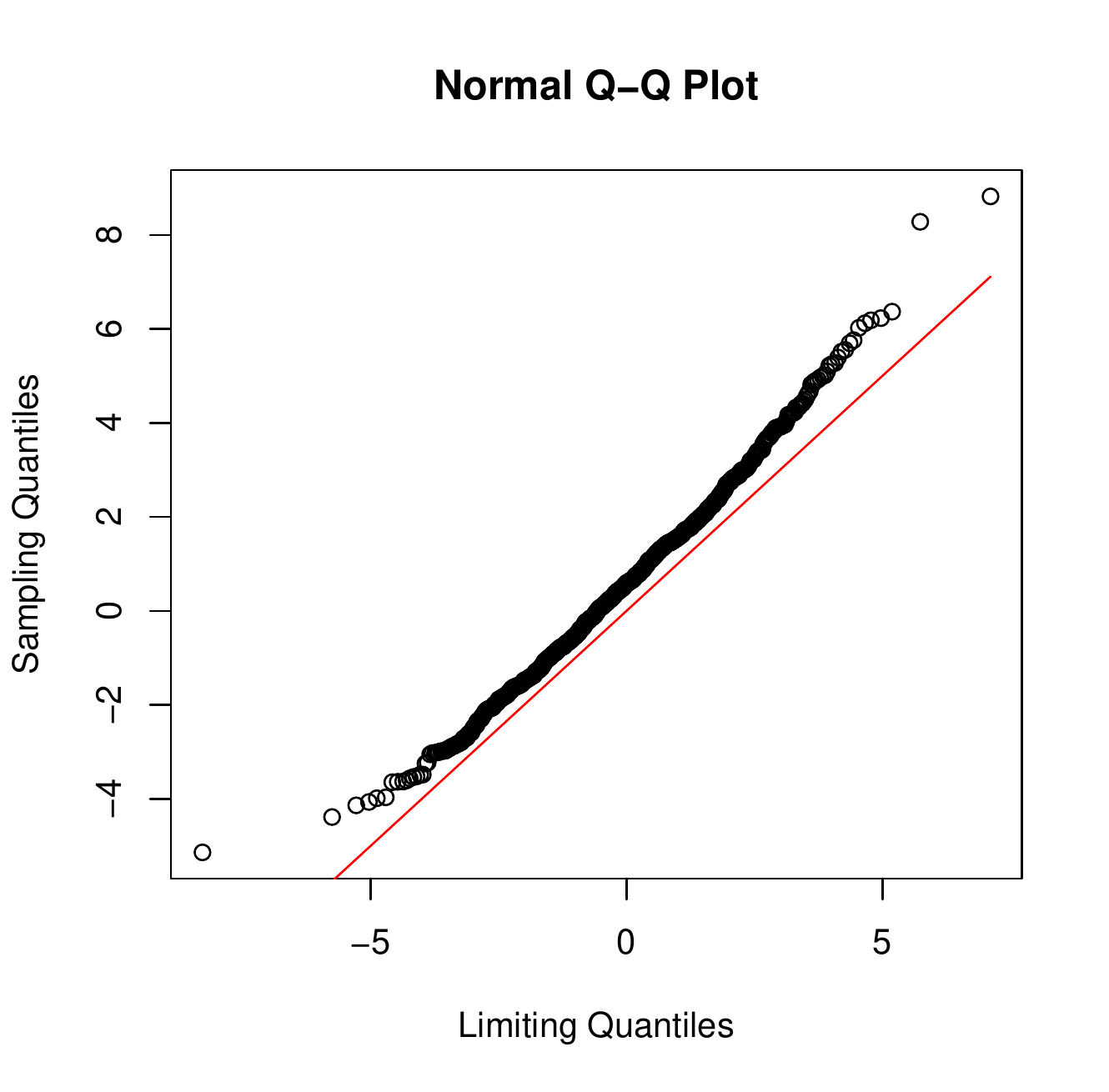}
	\includegraphics[width=.325\textwidth, height= .2\textheight]{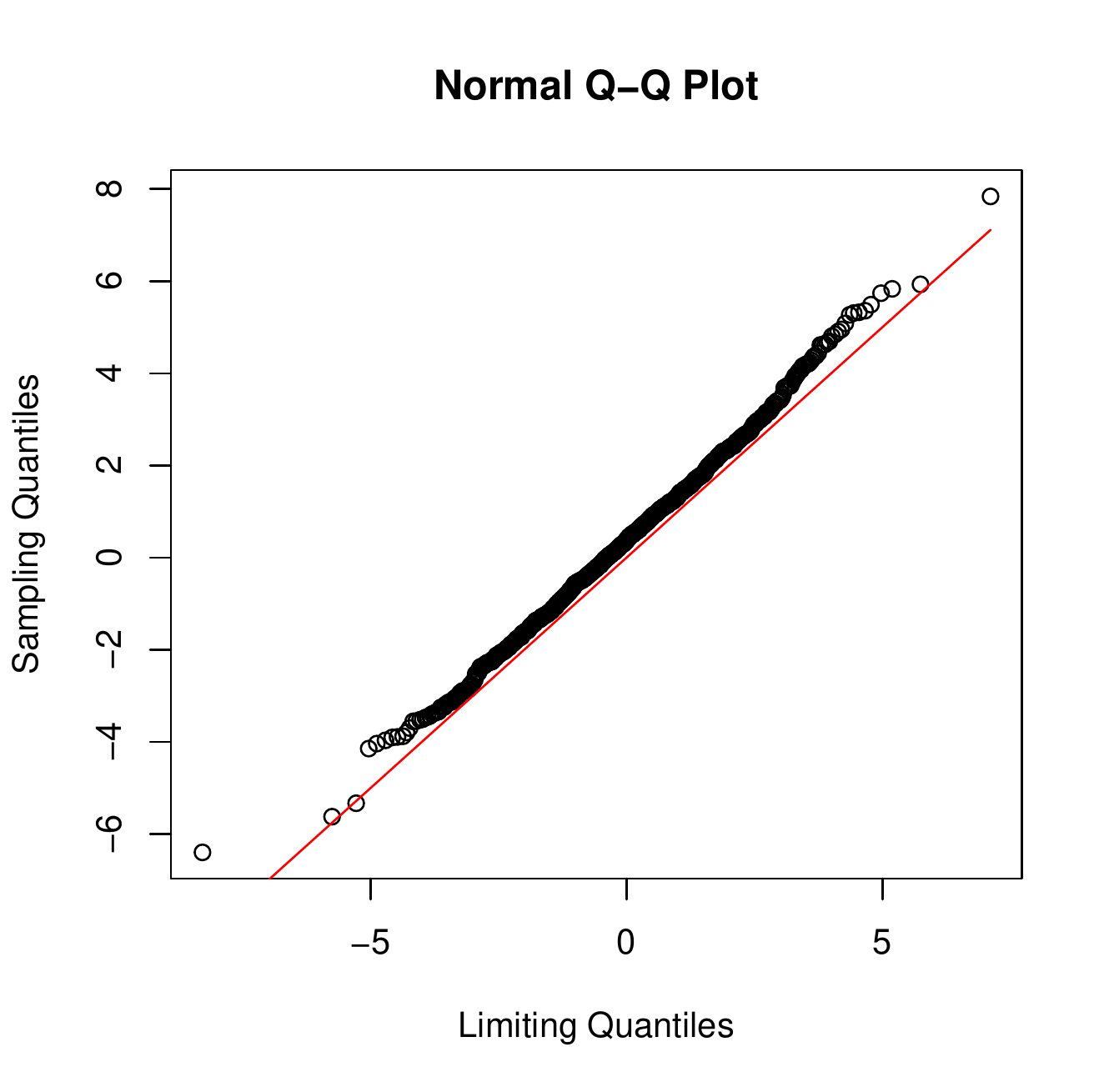}
	\includegraphics[width=.325\textwidth, height= .2\textheight]{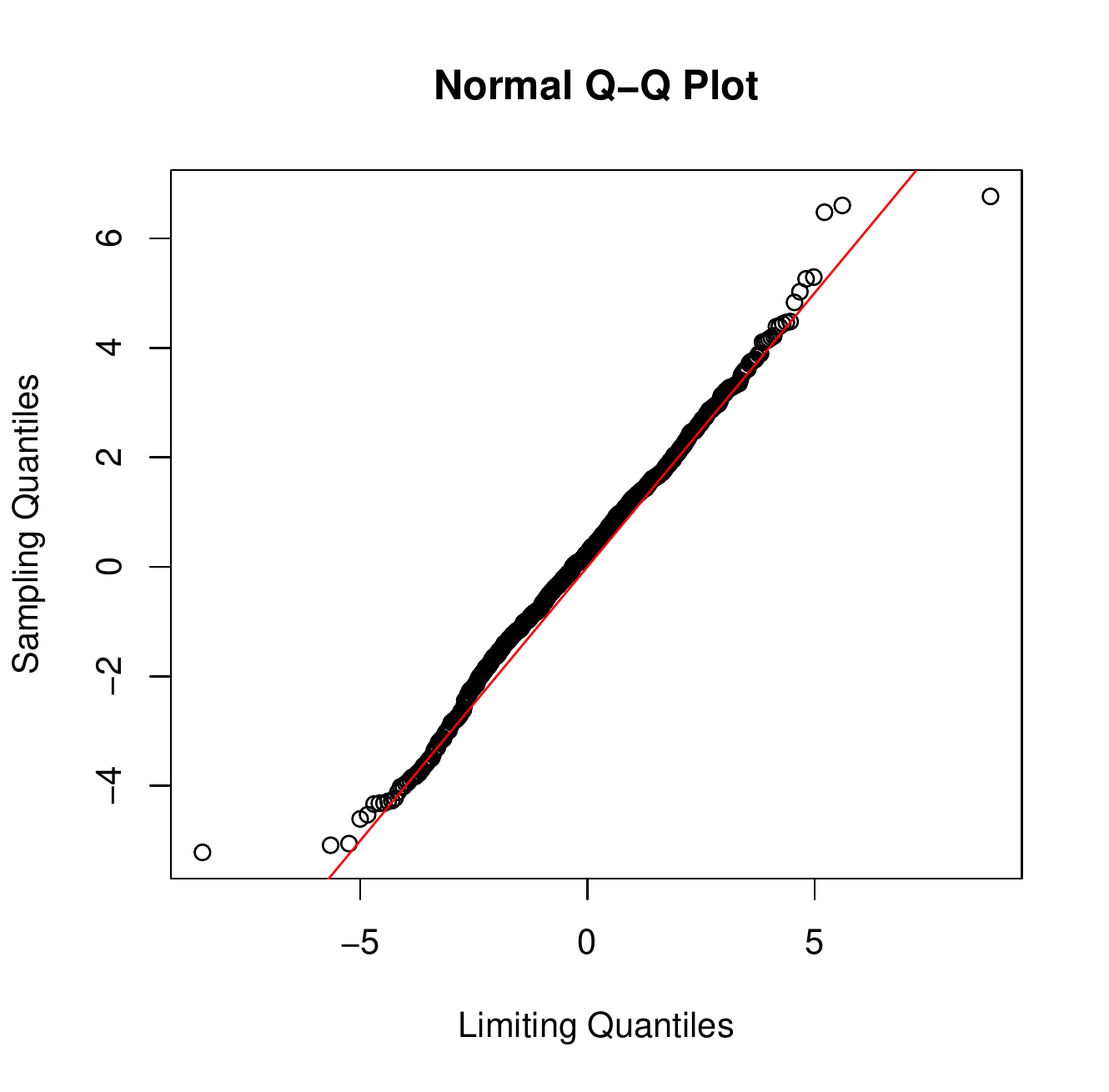}
	\caption{The same plots as in Figure~\ref{fig:Exp} for the case when $F$ is given in~\eqref{eq:F-PWA}.}
	\label{fig:PWA-2}
	%% label for entire figure
\end{figure}

Let us now consider the case when $f$ is piecewise constant. To fix ideas, as in~\cite{BF16}, let us take (with $c := 1 - 1/\sqrt{2}$) 
\begin{equation}\label{eq:F-PWA}
F(x) :=  \begin{cases} \frac{x}{\sqrt{2} -1}, & x \in [0,c] \\ 2 - \sqrt{2} + (\sqrt{2} - 1) x, & x \in [c,1]. \end{cases} 
\end{equation}
Then, from~\citet[Proposition 2.1]{BF16}, it follows that, for any $\xi \in C(\R_+)$, 
$$\M_F' \xi(x) = \begin{cases} \M_{[0,c]} \xi(x), & x \in [0,c] \\ 
\M_{[c,1]} \xi(x), & x \in [c,1]. 
\end{cases} $$
Theorem~\ref{thm:asymp_eff_mu} gives the asymptotic distribution of $\tau(g,\hat f_n) = \mu(h,\hat f_n) = \int f^2(x) dx$ (where $h(x) = x^2$) in this case. Figure~\ref{fig:PWA-2} shows the corresponding Q-Q plots (for sample sizes $n= 5000, 20000, 100000$) when $F$ is defined in~\eqref{eq:F-PWA} for which $\tau(g,f) \approx 1.828$ and $\sigma^2_{\mathrm{eff}}(f) \approx 3.314$. %(see~\eqref{eq:PWA}). 
We can see that the sampling distribution of $\sqrt{n}\{\tau(g,\hat f_n) - \tau(g,f)\}$ converges to the desired limiting normal distribution. In this case, the normal approximation seems quite good even for moderately small sample sizes.

Let us now provide some numerical evidence to illustrate that the limiting distribution in~\eqref{eq:Eff}  (in Theorem~\ref{thm:asymp_eff}) need not always be normal when estimating a general functional of the form $\tau(g,f)$ (see~\eqref{def:functional}). In this simulation study we consider the case when $f$ is piecewise constant; in particular, we take $F$ as defined in~\eqref{eq:F-PWA}. Consider estimating the functional $\tau(g,f) = \int x f^2(x) dx$ (i.e., $g(z,x) = x z^2$). The Q-Q plots in Figure~\ref{fig:PWA} show the sampling distribution of 
$\sqrt{n}\{\tau(g,\hat f_n) - \tau(g,f)\}$ against the quantiles of a mean zero normal distribution with the efficient variance   (which can be computed using~\eqref{eq:sigma}) as the sample size varies ($n=5000, 20000$ and $100000$). The Q-Q plots indicate several interesting features: (i) the sampling quantiles show a non-linear trend and is quite different from the theoretical quantiles (assuming the normal limit); (ii) the sampling distribution does not seem to change with $n$, suggesting that the sampling distribution is already close to its asymptotic limit (which is given by~\eqref{eq:Eff}). Thus, we conjecture that in this scenario, the sampling distribution of $\sqrt{n}\{\tau(g,\hat f_n) - \tau(g,f)\}$ does not converge to a normal limit with (mean 0 and) the efficient variance.

\begin{figure}[!ht]
	% \captionsetup[subfigure]{labelformat=empty}
	\centering
	\includegraphics[width=.325\textwidth, height= .2\textheight]{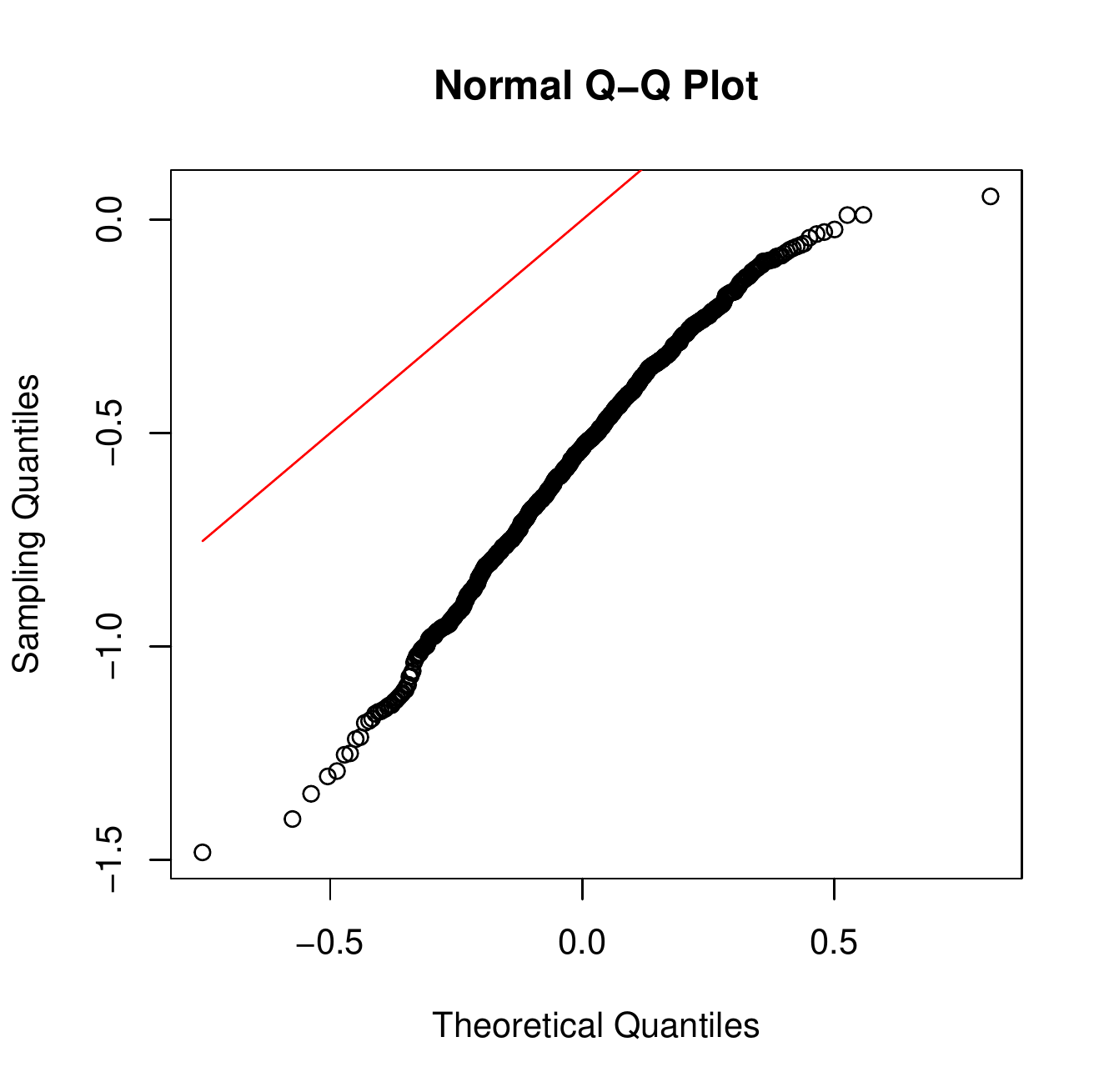}
	\includegraphics[width=.325\textwidth, height= .2\textheight]{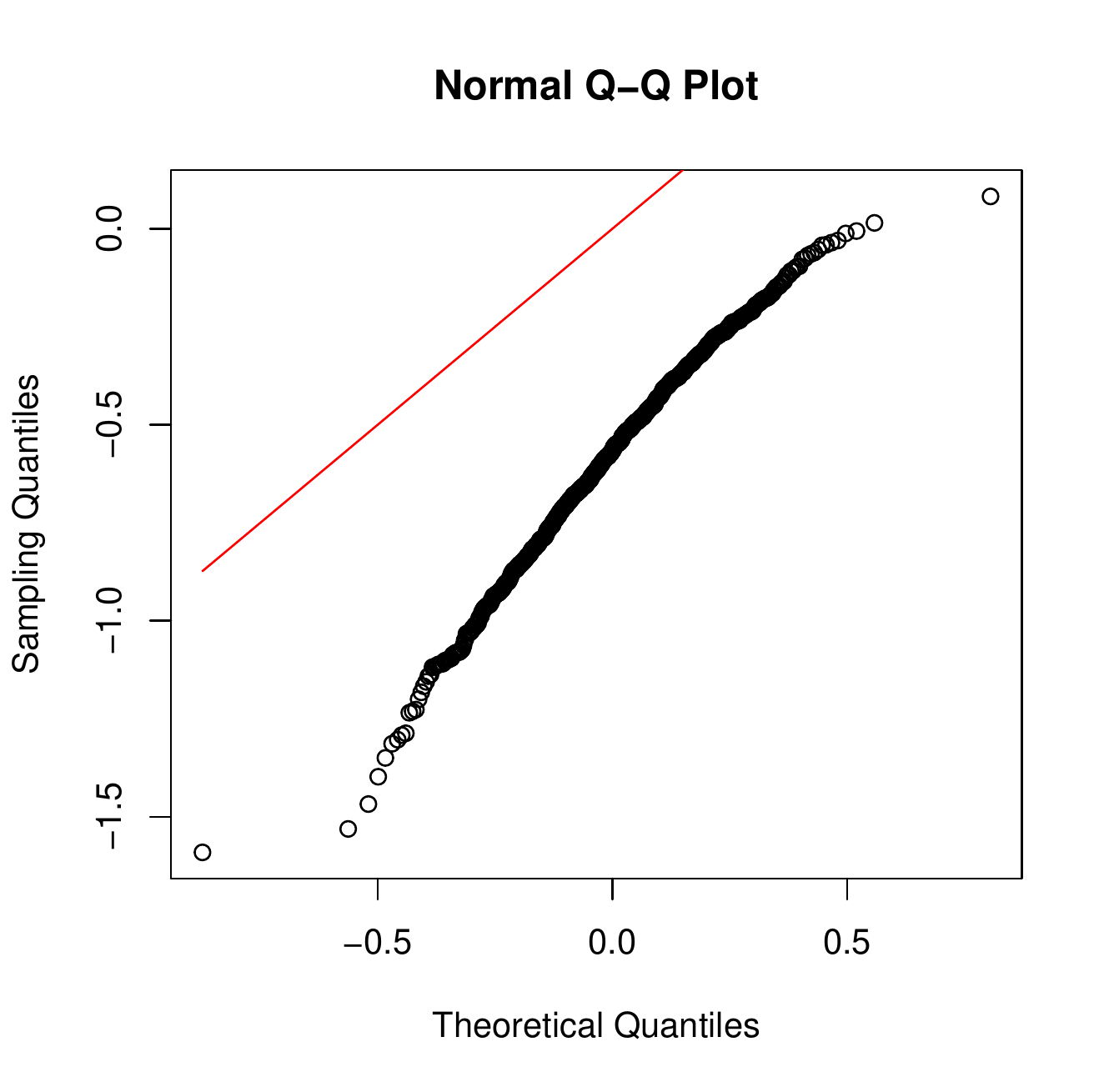}
	\includegraphics[width=.325\textwidth, height= .2\textheight]{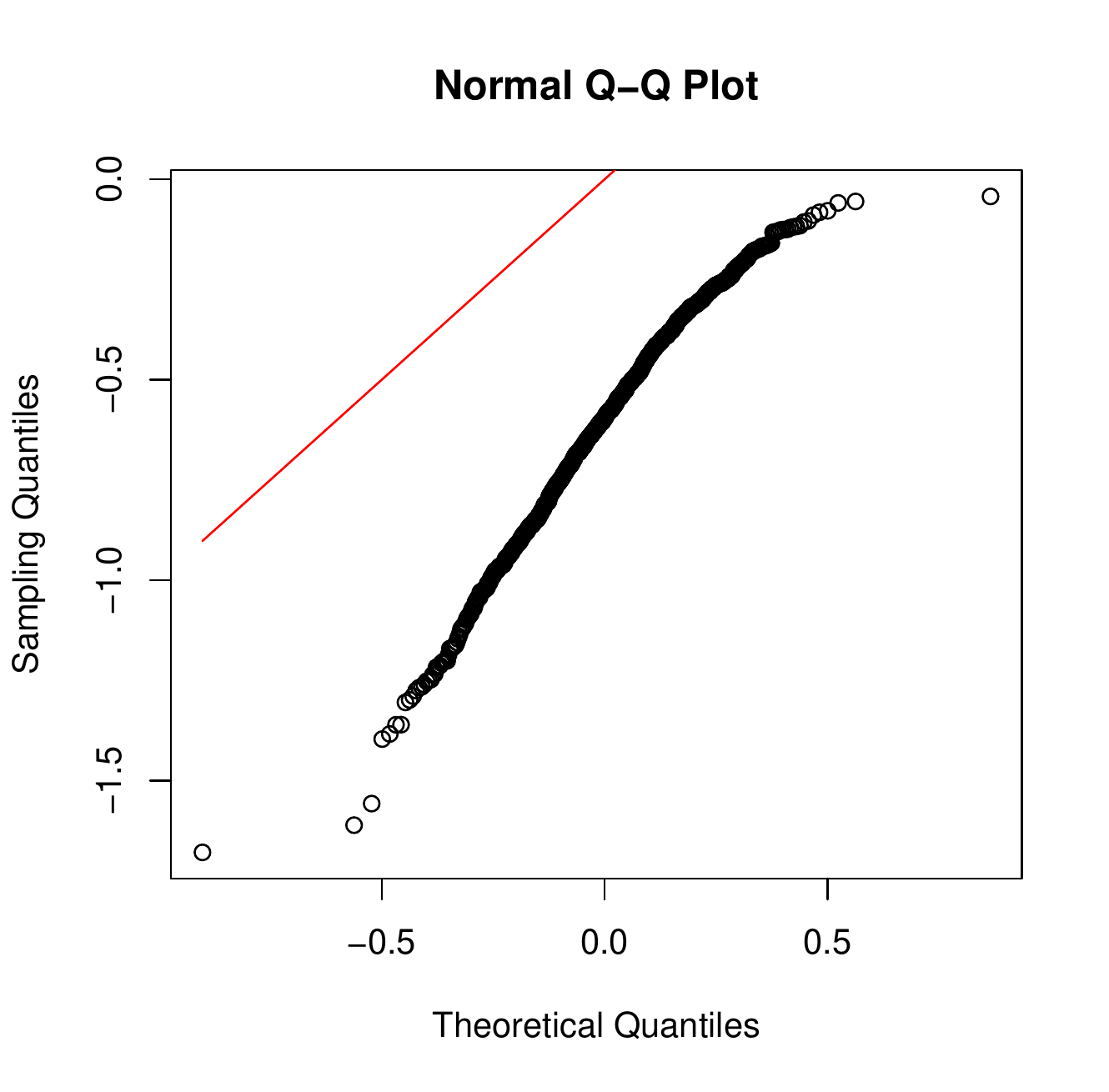}
	\caption{The same plots as in Figure~\ref{fig:Exp} for the case when $F$ is given in~\eqref{eq:F-PWA} and the functional of interest is $\tau(g,f) = \int x f^2(x) dx$ (i.e., $g(z,x) = x z^2$).}
	\label{fig:PWA}
	%% label for entire figure
\end{figure}

\section{Discussion}\label{sec:discussion}
In this paper we characterize the asymptotic  distribution of the nonparametric maximum likelihood (NPML) based plug-in estimator (see Theorem~\ref{thm:asymp_eff}) for estimating smooth integrated functionals of a monotone nonincreasing density $f$, under less restrictive assumptions on $f$. We also show that for a large class of functionals  the asymptotic limit is normal with mean zero and the semiparametric efficient variance (see Theorem~\ref{thm:asymp_eff_mu}), under minimal assumptions on $f$. In particular, we do not assume that the underlying true density $f$ is (i) smooth, or (ii) compactly supported, or (iii) lower bounded away from zero.%The main focus of the paper is to understand whether the simple plug-in estimator is efficient for estimating a certain class functionals (see~\eqref{def:functional}), under minimal assumptions on the underlying density. 

%It is natural to ask if our analysis can be extended to study the larger class of functionals $\int h(f(x),x)f(x)dx$ (see e.g.,~\cite{bickel1988estimating},~\cite{birge1995estimation},~\cite{laurent1996efficient}). It is not immediately clear to us  whether this can be done. It is worth mentioning in this regard that results in~\cite{nickl2007donsker} (also see~\citet[Chapter 7]{gine2015mathematical}) can be used to get the desired convergence but under much more restrictive assumptions on $f$. Consequently, it is definitely worth exploring if this property of semiparametric efficiency of NPML based plug-in estimators extends beyond our considered class of functionals. 

Finally, we believe that our basic proof idea can be used to study the asymptotic behavior of estimated functionals (based on the NPMLE) in other shape-restricted problems (e.g., decreasing convex densities or log-concave densities). A key open question in this direction is to express the NPMLE of the underlying density as a ``nice'' (e.g., Hadamard directionally differentiable) functional of the empirical distribution function of the data and to derive a corresponding weak convergence result analogous to~\citet[Theorem 2.1]{BF16}. %Indeed, we show that such a natural principle rigorously works for the case of decreasing densities under minimal assumptions for our class of integrated functionals. 
As future research, we plan to explore this direction for other shape-restricted problems.

\section{Proofs}\label{section_appendix_proofs}
\subsection{Proof of Theorem~\ref{thm:asymp_eff}}\label{sec:Steps}
We first give an overview of the main ideas and steps involved in the proof of Theorem~\ref{thm:asymp_eff}.  
Observe that, 
\begin{eqnarray} \label{eq:D-1}
\tau(g,\hat f_n) - \tau(g,f) & =& \int_0^\infty \left\{g(\fhat(x),x)-g(f(x),x)\right\} dx \nonumber \\
&=& \int_0^\infty(\fhat(x)-f(x))\dot{g}(f(x),x)dx+w_n,
\end{eqnarray}
where 
$$w_n := \frac{1}{2} \int_0^\infty(\fhat(x)-f(x))^2 \, \ddot{g}(\xi_n(x),x)dx,$$
with $|\xi_n(x)-f(x)|\leq |\fhat(x)-f(x)|$, for every $x \ge 0$.

\noindent {\bf Step 1}: %In Lemma~\ref{lemma:w_n} 
We claim that 
\begin{align}
n^{1/2} w_n = o_{\P}(1). \label{eqn:w_n}
\end{align}
We prove~\eqref{eqn:w_n} in Section~\ref{pf:MainT}. Below we complete the proof of Theorem \ref{thm:asymp_eff} assuming the validity of \eqref{eqn:w_n}. \newline

\noindent {\bf Step 2}: The first term in~\eqref{eq:D-1} can be handled as follows:
\begin{eqnarray*}
&& \int_0^\infty(\fhat(x)-f(x))\dot{g}(f(x),x)dx \\
& = & \left[(\hat F_n (x)-F(x))\dot{g}(f(x),x) \right]_0^\infty - \int_\s (\hat F_n (x)-F(x)) d[\dot{g}(f(x),x)]. 
\end{eqnarray*}
As $\dot{g}(f(\cdot),\cdot)$ is of finite total variation (by (A3)-(iii,b)) we have $(1-F(x))\dot{g}(f(x),x) \to 0$ as $x \to +\infty$. Thus,  
\begin{eqnarray*}
\sqrt{n} \int_0^\infty(\fhat(x)-f(x))\dot{g}(f(x),x)dx & = & - \sqrt{n} \int_\s (\hat F_n (x)-F(x)) \,d[\dot{g}(f(x),x)] \\
& = &- \int_\s \hat \D_n (x) \, d[\dot{g}(f(x), x)]  \\
& \stackrel{d}{\to} & - \int_\s \hat{\mathbb{G}}(x) \, d[\dot{g}(f(x), x)] 
\end{eqnarray*}
where the last step follows from~\citet[Theorem 2.1]{BF16} and a version of the continuous mapping theorem: Observe that (i) $ \hat \D_n \stackrel{d}{\to}  \hat{\mathbb{G}}$ in the space $\ell^{\infty}(\R_+)$; (ii) for any $\psi : \R_+ \to \R$ uniformly bounded, $\psi  \mapsto \int_\s \psi(x) d[\dot{g}(f(x),x)]$ is a continuous function. This proves that~\eqref{eq:Eff} holds.

\subsubsection{Proof of~\eqref{eqn:w_n}}\label{pf:MainT}
Fix $\epsilon>0$ and $\eta >0$. For $M^*>0$ a suitable constant (to be chosen later) we define $$A_n := \{ \|\fhat\|_{\infty} \le  M^* f(0+) \}, \qquad \mbox{for } \; n = 1,2,\ldots.$$		
	We have to study the probability
	\be
	\ &\P\left(n^{1/2} |w_n|>\eta\right) \le  \P\left(\sup\limits_{x \in \s}{|{\ddot{g}(\xi_n(x),x)}|}\|\fhat-f\|_2^2> 2 \eta \, n^{-1/2} \right) \nonumber \\
	&\leq \P\left(\sup\limits_{x \in \s, \; z \in [0,(M^*+1)f(0+)] }{|{\ddot{g}(z,x)}|}\|\fhat-f\|_2^2> 2 \eta\,  n^{-1/2}, \, A_n \right) +\P\left(A_n^c\right) \nonumber \\
	&\leq \P\left(\|\fhat-f\|_2^2> 2 K^{-1} \eta\,  n^{-1/2}, \, A_n\right) +\P\left(A_n^c\right),	\label{eq:w_n-bd} 
	\ee
	where we have used assumption (A3) which implies that $$\sup_{x \in \s, \; z \in [0,(M^*+1)f(0+)] }{|{\ddot{g}(z,x)}|} \le K,$$ and for any function $\psi:\R \to \R$, $\|\psi\|_2^2 := \int \psi^2(x) dx$.
	
	First, observe that by~\citet[Theorem 2 and Remark 3]{woodroofe1993penalized}, the sequence of random variables $\frac{\fhat(0+)}{f(0+)}$ is asymptotically $P$-tight and consequently, given any $\epsilon>0$ there exists $M^*>0$ (depending on $\epsilon>0$) such that for $n$ large enough (depending on $\epsilon>0$)
	\be 
	\P\left(\|\fhat\|_{\infty}> M^* f(0+)\right)\leq \frac{\epsilon}{2}. \label{eqn:linfty_bound}
	\ee
	Thus, the second term in~\eqref{eq:w_n-bd} is bounded from above by $\epsilon/2$.
	
	Next, note that $\|\fhat\|_{\infty}\leq M^*f(0+)$ implies that 
	\begin{eqnarray*}
		\|\fhat-f\|_2^2 & \leq & 2(\sqrt{M^*}+1)^2f(0+)\frac{1}{2}\int \left(\sqrt{\fhat(x)}-\sqrt{f(x)}\right)^2 dx \\
		& =& 2(\sqrt{M^*}+1)^2f(0+)\, H^2\left(\hat{P}_n,P\right),
	\end{eqnarray*}
	where $$H^2\left(\hat{P}_n,P\right) := \frac{1}{2}\int \left(\sqrt{\fhat(x)}-\sqrt{f(x)}\right)^2 dx $$ is the Hellinger divergence between $\hat{P}_n$ and $P$. As assumptions (A1)-(A2) are satisfied, by~\citet[Theorem 7.12]{geer2000empirical}, there exists  $c>0$ such that for all $\check C\geq c$, 
	\begin{equation*} 
	\P\left(H^2\big(\hat{P}_n,P\big)> \check C^2n^{-2/3}\right)\leq c\exp\left(-\frac{\check C^2n^{1/3}}{c^2}\right).\label{eqn:hellinger_bound}
	\end{equation*}
	Consequently, for an appropriate $C^*$ (depending on $M^*$) and $\tilde c$, we have  
	{\be 
		\P\left(\|\fhat-f\|_2^2>C^*n^{-2/3},\|\fhat\|_{\infty}\leq  M^* f(0+)\right)&\leq c\exp\left(-\tilde cn^{1/3}\right)\leq \frac{\epsilon}{2}, \qquad  \label{eqn:l2_bound}
		\ee}
	for $n$ large enough (depending on $\epsilon$). Consequently, using~\eqref{eqn:l2_bound} and~\eqref{eqn:linfty_bound}, for $n$  sufficiently large (which, in particular, implies that and $C^*n^{-2/3} \le 2 K^{-1} \eta \, n^{-1/2}$),~\eqref{eq:w_n-bd} yields $$\P\left(n^{1/2} |w_n|>\eta\right) \le \frac{\epsilon}{2} + \frac{\epsilon}{2}.$$ As $\epsilon$ and $\eta$ are arbitrary, we conclude $w_n=o_{\P}(n^{-1/2})$ as $n \to \infty$. \qed
%\end{proof}

\subsection{Proof of Theorem~\ref{thm:asymp_eff-2}}\label{sec:Thm-Normal-Limit}
We first note that $\sigma^2_{\mathrm{eff}}(g,f)<+\infty$. This follows from the fact that $\dot{g}^2(f(\cdot),\cdot)$ is a bounded function on $[0,\infty)$ (as $\dot{g}(f(\cdot),\cdot)$ is a function of bounded total variation on $[0,\infty)$).

%$$\int_{0}^{\infty}\dot{g}^2(f(x),x)f(x)dx=\int _{0}^{\infty}f^3(x)\dot{g}^2(\xi(x),x)$$ for some $0\leq\xi(x)\leq f(x)$ and consequently, by assumption (A3), $$\sigma^2_{\mathrm{eff}}(g,f)\leq f^2(0+)K_{f(0+)}^2.$$
Now, to show \eqref{eq:Eff-2}, first note that when $F$ is strictly concave on $\R_+$, it follows from Proposition~\ref{prop:BF} that, $\M_F' \xi = \xi$ for all $\xi \in C(\R_+)$. Thus, if $F$ is strictly concave then $\hat \G = \G$ and consequently,
\be
Y=-\int_\s \G(x)d \psi(x),
\ee
where $$\psi(x) :=\dot{g}(f(x),x) \qquad x \in \s.$$ Let $\s = [0,s]$ where $s \in \R$ or $s =+\infty$. {As $\psi(\cdot)$ is of finite total variation on $\s$ (by (A3)-(iii,b)), we shall assume without loss of generality that $\psi(s) := \lim_{x \to s} \psi(x) = 0$ (otherwise we can work with $\psi(\cdot) - \psi(s)$ instead of $\psi(\cdot)$ and that would not change the value of $Y$)}. 

If $s \in \R$ then it is easy to see that $Y$ is a centered Gaussian random variable. In the following we show that even when $s= +\infty$, $Y$ is a centered Gaussian random variable. We write $Y=-\int_{0}^N \G(x)d\psi(x) -\int_{N}^{\infty}\G(x)d\psi(x).$ Now $\E|\int_{N}^{\infty}\G(x)d\psi(x)|\leq \int_{N}^{\infty}\E|\G(x)|d\psi(x)$ which converges to $0$ (by (A3)-(iii,b)) as $N\rightarrow \infty$. Consequently $Y$ is the in probability (and hence in distribution) limit of $-\int_{0}^N \G(x)d\psi(x)$ as $N\rightarrow \infty$. Now for every $N$, $-\int_{0}^N \G(x)d\psi(x)$ is a Gaussian random variable. Therefore to show \eqref{eq:Eff-2}, we only need to show that the mean and variance of $-\int_{0}^N \G(x)d\psi(x)$ converge to $0$ and $\sigma^2_{\mathrm{eff}}(g,f)$ respectively as $N\rightarrow \infty$. Indeed by Fubini's Theorem $-\int_{0}^N \G(x)d\psi(x)$ has mean $0$ and this completes the mean part of the convergence. 

We next show that $\mathrm{Var}(Y)$ matches $\sigma^2_{\mathrm{eff}}(g,f)$. If $s = +\infty$, the same proof will also establish the validity of the convergence of the variance of $-\int_{0}^N \G(x)d\psi(x)$ to $\sigma^2_{\mathrm{eff}}(g,f)$ (by simply interpreting integrals appearing in the proof with respect to $\psi$ on $[0,\infty)$ as limits of integrals with respect to $\psi$ on $[0,N]$).

%The evaluation can be done using brute force algebra. However, here we resort to somewhat softer arguemnts using the strength of It\^{o} calculus and some elementary real analysis. 
To operationalize our argument, we begin with the following simplification:
\be 
\int_0^\infty \G(x)d \psi (x) =\int_0^\infty  \mathbb{B}\circ F(x)d \psi(x)\stackrel{d}{=}\int_0^s \mathbb{W}\circ F(x)d \psi(x)-\mathbb{W}(1)\int_0^s F(x)d \psi(x),
\ee
where $\mathbb{W}$ is the standard Brownian motion on $[0,1]$ and $\stackrel{d}{=}$ stands for equality in distribution. To simplify notation, in the following every integral is taken on the set $[0,s]$. Therefore,
\be 
 \mathrm{Var}(Y)&=\mathrm{Var}\left(\int \mathbb{W}\circ F(x)d\psi(x)\right)+\mathrm{Var}\left(\mathbb{W}(1)\int F(x)d\psi(x)\right)\\&\qquad \qquad -2\,\mathrm{Cov}\left(\int \mathbb{W}\circ F(x)d \psi(x),\mathbb{W}(1)\int F(x)d\psi(x)\right)\\
&=\E\left(\int \mathbb{W}\circ F(x)d\psi(x)\right)^2+\left(\int F(x)d \psi(x)\right)^2\\&\qquad \qquad -2\int F(x)d \psi(x)\;\E\left(\mathbb{W}(1)\int \mathbb{W}\circ F(x)d\psi(x)\right).
\ee
Now, by simple integration by parts, since $\lim_{x \to \infty} \psi(x) = 0$,
\be 
\int F(x)d \psi(x)&=-\int  \psi(x)dF(x),
\ee
and consequently it is enough to prove the following lemma to complete the proof of Theorem~\ref{thm:asymp_eff-2} (as then $ \mathrm{Var}(Y)$ reduces to $ \mathrm{Var}(\psi(X)) =  \mathrm{Var}(\dot{g}(f(X),X))$).

\begin{lemma}\label{lemma:Ito_Isometry}
	The following identities hold under assumptions (A1)-(A3):
	\be 
	\E\left(\mathbb{W}(1)\int \mathbb{W}\circ F(x)d\psi(x)\right)&=\int F(x)d \psi(x), \quad \text{and}\\
	\E\left(\int \mathbb{W}\circ F(x)d \psi(x)\right)^2&=\int \psi^2(x)dF(x).
	\ee
\end{lemma}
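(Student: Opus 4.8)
The plan is to prove the two identities in Lemma~\ref{lemma:Ito_Isometry} by exploiting the covariance structure of Brownian motion together with integration-by-parts (Fubini) manipulations. Throughout, all integrals are over $\s = [0,s]$, and we use the normalization $\psi(s) = \lim_{x \to s}\psi(x) = 0$ already in force, so that $\int F \, d\psi = -\int \psi \, dF$. Note first that all the integrals below are well-defined: $\psi(\cdot) = \dot g(f(\cdot),\cdot)$ is bounded and of finite total variation on $\s$ by (A3)-(iii,b), $F$ is bounded, and $\W \circ F$ has bounded sample paths on $\s$, so the relevant Lebesgue--Stieltjes and stochastic integrals make sense and Fubini applies to the (absolutely convergent) double integrals that arise after taking expectations.

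For the first identity, I would interchange expectation and the $d\psi$-integral (justified by $\E|\W\circ F(x)| \le \sqrt{F(x)} \le 1$ and finite total variation of $\psi$) to get $\E\big(\W(1)\int \W\circ F(x)\, d\psi(x)\big) = \int \E[\W(1)\,\W(F(x))]\, d\psi(x)$. Since $F(x) \in [0,1]$, the Brownian covariance gives $\E[\W(1)\,\W(F(x))] = F(x)$, and hence the left side equals $\int F(x)\, d\psi(x)$, as claimed. For the second identity, expanding the square and again applying Fubini yields
\begin{equation*}
\E\left(\int \W\circ F(x)\, d\psi(x)\right)^2 = \int\!\!\int \E[\W(F(x))\,\W(F(y))]\, d\psi(x)\, d\psi(y) = \int\!\!\int \big(F(x)\wedge F(y)\big)\, d\psi(x)\, d\psi(y),
\end{equation*}
using $\E[\W(a)\W(b)] = a \wedge b$ and the monotonicity of $F$, so that $F(x) \wedge F(y) = F(x \wedge y)$. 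The remaining task is the deterministic identity $\int\!\int F(x\wedge y)\, d\psi(x)\, d\psi(y) = \int \psi^2(x)\, dF(x)$. I would establish this by writing $F(x \wedge y) = \int \mathbf{1}\{t \le x\}\,\mathbf{1}\{t \le y\}\, dF(t)$ (valid since $F(x\wedge y) = F(x \wedge y) - F(0) = \int_0^{x\wedge y} dF(t)$, and assuming without loss of generality $F(0)=0$), then switching the order of integration via Fubini to obtain $\int \big(\int \mathbf{1}\{t \le x\}\, d\psi(x)\big)^2 dF(t) = \int (\psi(s) - \psi(t-))^2\, dF(t) = \int \psi^2(t)\, dF(t)$, where the last step uses $\psi(s) = 0$ and that $\psi$ and its left-continuous version agree $dF$-a.e. (the set of discontinuities of the finite-variation function $\psi$ is countable, and under (A1) this is $F$-null since $F$ has a density).

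The main obstacle I anticipate is handling the boundary and tail behavior rigorously when $s = +\infty$: several of the integration-by-parts steps and the Fubini interchanges require that the contributions near infinity vanish, which is exactly where assumption (A3)-(iii,b) (finite total variation of $\psi$ on $[0,\infty)$) and the already-established fact $(1-F(x))\psi(x) \to 0$ get used, together with the truncation argument ($[0,N]$ then $N \to \infty$) indicated in the main text. A secondary technical point is carefully justifying $F(x) \wedge F(y) = F(x \wedge y)$ at jump points of $F$ and the measure-theoretic identification of $\psi$ with its left-continuous modification under $dF$; both are routine given (A1) but should be stated. Once these are in place, the two displayed identities follow, and combined with $\int F\, d\psi = -\int \psi\, dF$ the computation in the main text reduces $\mathrm{Var}(Y)$ to $\mathrm{Var}(\psi(X)) = \mathrm{Var}(\dot g(f(X),X)) = \sigma^2_{\mathrm{eff}}(g,f)$, completing the proof.
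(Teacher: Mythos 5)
Your proof of the first identity matches the paper's: interchange expectation with the $d\psi$-integral (justified by the total-variation bound) and apply the Brownian covariance $\E[\mathbb{W}(1)\mathbb{W}(F(x))]=\min\{1,F(x)\}=F(x)$. For the second identity you take a genuinely different route. After reaching $\iint F(x\wedge y)\,d\psi(x)\,d\psi(y)$ via Fubini and $\E[\mathbb{W}(a)\mathbb{W}(b)]=a\wedge b$, you represent the kernel as $F(x\wedge y)=\int\mathbf{1}\{t\le x\}\mathbf{1}\{t\le y\}\,dF(t)$ (valid since $F$ is absolutely continuous with $F(0)=0$) and apply Fubini once more so the double $d\psi$-integral factors into a perfect square, giving $\int\big(\psi(s)-\psi(t^-)\big)^2\,dF(t)=\int\psi^2(t)\,dF(t)$ after using $\psi(s)=0$ and the fact that the countably many discontinuities of the finite-variation function $\psi$ form a $dF$-null set. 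The paper instead exploits the symmetry of the kernel to write the double integral as $2\int_0^s\int_0^{x'}F(x)\,d\psi(x)\,d\psi(x')$, then performs two successive integrations by parts (first with $v(x')=\int_0^{x'}F\,d\psi$ against $d\psi$, then with $\psi F$ against $d\psi$), landing on $\tfrac{1}{2}\int\psi^2\,dF$. Both arguments are correct. Your kernel/Fubini route is arguably cleaner because it avoids tracking boundary terms twice and produces the square directly, at the modest cost of the measure-theoretic remark about $\psi$ versus its left-continuous modification; the paper's integration-by-parts route sidesteps that bookkeeping but relies on the vanishing of boundary terms at $s$, which in turn rests on the normalization $\psi(s)=0$ and the $[0,N]\to[0,\infty)$ truncation for $s=\infty$ — a step both proofs must invoke and which you correctly flag.
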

\begin{proof}
	The first identity is obvious since under our assumptions one can interchange the integral and expectation (in particular Fubini's Theorem is applicable since $\int_{0}^{s}\E(|\G(x)|)d(\dot{g}(f(x),x))<+\infty$ by part (iii,b) of (A3)) and observe that $$\E(\mathbb{W}(1)\mathbb{W}(F(x))) = \mathrm{Cov}\left(\mathbb{W}(1),\mathbb{W}(F(x))\right)=\min\{1,F(x)\}=F(x).$$
	
	%We then first verify the second identity for $g\in C^1(\R)$ and conclude by a limiting argument. 
	For the second identity (once again we freely interchange integrals and expectation by Fubini's Theorem which is applicable by part (iii,b) of (A3)), note that 
	\be 
	\E\left(\int_0^s \mathbb{W}\circ F(x)d \psi(x)\right)^2&=\E\left(\int_0^s\int_0^s \mathbb{W}\circ F(x)\mathbb{W}\circ F(x')d \psi(x)d \psi(x')\right)\\
	&=2\int_{0}^{s}\int_{0}^{x' \wedge s}F(x)d \psi(x)d \psi(x').\label{eqn:eff_main_term}
%	&=2\int_{0}^{\infty}F(x')g(x')dg(x')-2\int_{0}^{\inty}\int_{0}^{x'}g(x)dF(x)dg(x').\
	\ee
	Now, let $v(x') :=\int_{0}^{x'}F(x)d \psi(x)$, for $x' \ge 0$. Then by repeated integration by parts and noting that $\psi(+\infty) = 0$ by our assumption, 
	\be 
	\ & \int_{0}^{s}\int_{0}^{x' \wedge s}F(x)d \psi(x)d \psi(x')\\
	&=\lim_{x'\rightarrow s}v(x') \psi(x')-v(0) \psi(0)-\int_{0}^{s} \psi(x')dv(x')\\
	&= -\int_{0}^{s} \psi(x')dv(x')\\
	&=-\left[\int_{0}^{s} \psi(x')F(x')d \psi(x')\right]\\
	&=-\left[\lim_{x'\rightarrow s}F(x')\frac{1}{2}\psi^2(x')-F(0)\frac{1}{2} \psi^2(0)-\frac{1}{2}\int_{0}^{s}\psi^2(x')dF(x')\right]\\
	&=\frac{1}{2}\int_{0}^{s}\psi^2(x')dF(x').\label{eqn:eff_main_term_simplify}
	\ee
	Plugging back \eqref{eqn:eff_main_term_simplify} into \eqref{eqn:eff_main_term} completes the proof of Lemma \ref{lemma:Ito_Isometry}.
\end{proof}

\subsection{Proof of Theorem~\ref{thm:asymp_eff_mu}}\label{sec:Reduction}
Proposition~\ref{prop:BF} will help us demonstrate that indeed $Y$ (as defined in~\eqref{eq:Eff_nu}) has a Gaussian distribution with the desired efficient variance. However, to give more intuition to the reader, we begin by discussing two simple scenarios.
	
{\bf Case (i)}: First, let $F$ be strictly concave on $\R_+$. From Proposition~\ref{prop:BF} it follows that $\M_\theta'$ is linear if and only if $\theta$ is strictly concave on $\R_+$ (\cite{BF16}). Thus, $\M_F' \xi = \xi$ for all $\xi \in C(\R_+)$. Thus, if $F$ is strictly concave then $\hat \G = \G$ and consequently $$Y {=} - \int \G(x)\, d [h'(f(x))].$$ In this form, it is easy to see that $Y$ is a centered Gaussian random variable. The fact that its variance matches the efficiency bound can be demonstrated as in the proof of Theorem~\ref{thm:asymp_eff-2}  in Section~\ref{sec:Thm-Normal-Limit} (with $\psi(x) = h'(f(x))$).

{\bf Case (ii)}: Next consider the other extreme case, i.e., $F$ is piecewise affine. In this case $f$ is piecewise constant with jumps (say) at $0<t_1<t_2<\ldots< t_k<\infty$ and values $v_1>\ldots> v_k$, for some $k \ge 2$ integer, i.e., $$f(x) = \sum_{i=1}^k v_i \mathbf{1}_{(t_{i-1},t_i]}(x),$$ where $t_0 \equiv 0$. As $f$ only takes $k+1$ values (namely, $\{v_1, \ldots, v_k, 0\}$) we have that $$Y =  - \sum_{i=1}^k a_i \hat \G(t_i)$$ where $$a_i  := \{h'(f(t_i-))  - h'(f(t_i+))\}.$$ Further, from Proposition~\ref{prop:BF}, we have, for every $i = 1,\ldots, k$, $\hat \G(t_i) \equiv \M_F'\G(t_i) = \G(t_i).$ As $\G(t_k) = 0$ a.s., we have $$Y = \sum_{i=1}^{k-1}  a_i  \G(t_i)=\int \G(x)\,d[h'(f(x))]$$ which is the same distribution as in the case when $F$ was strictly concave.

Since in both the above cases we have $Y=\int \G(x)d [h'(f(x))]$, one can conjecture that the result should be generally true for any concave $F$. This intuition indeed turns out to be correct as shown below. \newline

Let $\psi(x) := h'(x)$, for $ x \ge 0$. Note that by assumption (A3) applied to $g(z,x)=h(z)$, we have that $\psi(f(x))$ has bounded total variation. Consequently, let us assume that the function $\psi \circ f$ is monotone (otherwise we can split $\psi \circ f$ as the difference of two monotone functions and apply the following to each part). Note that it is enough to show that for any $\xi \in C(\R_+)$,  $$\int \M_F' \xi(x) d[\psi(f(x))] = \int \xi(x) d[\psi(f(x))]. $$ Then we can take $\xi$ to be $\G \equiv \B \circ F$ (which is almost surely continuous) and we will obtain the desired result.
	
Let $$I := \{x \in (0,\infty): F \;\mbox{is affine in an open interval around $x$}\}.$$  It is easy to show that $I$ is a Borel measurable set. Consider the collection $\{T_{F,x}: x \in I\}$, where $T_{F,x}$ is defined in Proposition~\ref{prop:BF} (also see~\citet[Proposition 2.1]{BF16}). First note that $x \in T_{F,x}$, for every $x \in I$. We now claim that there are at most countably many such distinct $T_{F,x}$'s, as $x$ varies in $I$. This follows from the fact that for any $x$ in the support of $F$, $F$ has a strictly positive slope on $T_{F,x}$ (i.e., $F$ cannot be affine with slope 0 on $T_{F,x}$, as $F$ is concave and nondecreasing and $x$ is in the support of $F$) which implies that we can associate a unique rational number in the range of $F$ to this interval $T_{F,x}$. Let $\{T_{F,x_i}\}_{i\ge 1}$ (for $x_i \in (0,\infty)$) be an enumeration of this countable collection $\{T_{F,x}: x \in I\}$. Obviously, $I \subset \cup_{i=1}^\infty T_{F,x_i}$.
	
	Let $\xi \in C(\R_+)$.  Then, 
	\begin{eqnarray*}
		&&	\left|\int \M_F' \xi(x) d[\psi(f(x))] - \int \xi(x) d[\psi(f(x))] \right| \\ 
		& \le & \int_I |\M_F' \xi(x) -\xi(x)| d[\psi(f(x))] + \int_{[0,\infty)\setminus I} |\M_F' \xi(x) - \xi(x)| d[\psi(f(x))] \\
		& \le & \sum_{i=1}^\infty \int_{T_{F,x_i}} |\M_{T_{F,x_i}} \xi(x) -\xi(x)| d[\psi(f(x))] 
	\end{eqnarray*}
	where the last inequality follows from the fact that for $x \in [0,\infty)\setminus I$, $\M_F' \xi(x) = \xi(x)$ (see \citet[Remark 2.2]{BF16}) and the above integrals are viewed as a Lebesgue-Stieltjes. Notice now that on the open interval $T_{F,x_i}$, $f$ is constant (as $F$ is affine on $T_{F,x_i}$). Hence each of the integrals in the last display equals 0. This completes the proof. \qed
	
\subsection{Proof of Lemma \ref{lem:Basic}}
%\begin{proof}
	Let $X_{(1)} < X_{(2)} <\ldots < X_{(n)}$ be the ordered data points and let $$\hat \theta_i = \hat f_n(X_{(i)}), \qquad \mbox{ for } i=1,\ldots, n,$$ be the fitted values of the Grenander estimator at the ordered data points. A simple characterization of $\hat \theta := (\hat \theta_1,\ldots, \hat \theta_n)$ is given by $$\hat \theta = \argmin_{\theta \in \C} \sum_{i=1}^n \left(\frac{1}{n(X_{(i)} - X_{(i-1)})} - \theta_i\right)^2 (X_{(i)} - X_{(i-1)}),$$ where $$ \C = \{\theta \in \R^n: \theta_1 \ge \cdots \ge \theta_n\},$$ and $X_{(0)} \equiv 0$; see e.g.,~\citet[Exercise 2.5]{GJ14}. From the characterization of projection onto the  closed convex cone $\C$ it follows that for any function $h: \R \to \R$ (see e.g.,~\citet[Theorem 1.3.6]{RWD88})
	\begin{eqnarray*}
		&& \sum_{i=1}^n \left(\frac{1}{n(X_{(i)} - X_{(i-1)})} - \hat \theta_i\right) (X_{(i)} - X_{(i-1)}) h(\hat \theta_i) =0 \\
		& \Leftrightarrow & \frac{1}{n} \sum_{i=1}^n h(\hat \theta_i) = \sum_{i=1}^n h(\hat \theta_i) \hat \theta_i  (X_{(i)} - X_{(i-1)}) \\
		& \Leftrightarrow & \P_n[h\circ \hat f_n] = \hat P_n[h\circ \hat f_n]. \end{eqnarray*}
	The last step follows from the fact that $$\hat P_n[h \circ \hat f_n]  = \int h( \hat f_n(x)) \hat f_n(x) dx = \sum_{i=1}^n h (\hat \theta_i) \hat \theta_i (X_{(i)} - X_{(i-1)}),$$ as $ \hat f_n(\cdot)$ is constant on the interval $(X_{(i-1)}, X_{(i)}]$, for every $i=1,\ldots, n$. \qed

\subsection{Proof of Theorem \ref{theorem:uniform}}\label{pf:Thm-Unif}
First let us define $g:[0,\infty) \to \R$ as   $g(x) :=h(x)x$, for $x \ge 0$, and consequently $g\in C^4([0,\infty))$, $g''(x)=h''(x)x+2h'(x)$, and 

$$\mu(h,\fhat)-\mu(h,f)=\int_0^1 [h(\fhat(x))-h(f(x))]dx.$$

For $x \in [0,1]$, note that for some $\xi_n(x)$ lying between $f(x) \equiv 1$ and $\fhat(x)$ one has by a four term Taylor expansion (allowed by the assumption that $h\in C^4([0,\infty))$)
\be 
\ & \frac{n\int_0^1 [h(\fhat(x))-h(1)]dx-\frac{1}{2}h''(1)\log{n}}{\sqrt{3[\frac{1}{2}h''(1)]^2\log{n}}}\\&=\frac{n\int_0^1 (\fhat(x)-1)^2dx-\log{n}}{\sqrt{3\log{n}}} \label{eq:Display}\\
&\quad +\frac{1}{6}\frac{n\int_0^1 h^{(3)}(1)(\fhat(x)-1)^3 dx}{\sqrt{{3[\frac{1}{2}h''(1)]^2\log{n}}}}+\frac{1}{24}\frac{n\int_0^1 h^{(4)}(\xi_n(x))(\fhat(x)-1)^4 dx}{\sqrt{{3[\frac{1}{2}h''(1)]^2\log{n}}}}.
\ee
Therefore, if we prove that 
$$\frac{n\int_0^1 (\fhat(x)-1)^k dx}{\sqrt{\log{n}}}=o_{\P}(1),\quad \mbox{for } \, k=3,4,$$ then  
\be 
\left|\frac{n\int_0^1 h^{(4)}(\xi_n(x))(\fhat(x)-1)^4 dx}{\sqrt{3\log{n}}} \right|&\leq \sup\limits_{z\in [0,\|\fhat\|_{\infty} \vee 1]}|h^{(4)}(z)|\frac{n\int_0^1 (\fhat(x)-1)^4 dx}{\sqrt{3\log{n}}} =o_{\P}(1),
\ee
since $\|\fhat\|_{\infty}=O_{\P}(1)$ by \cite{woodroofe1993penalized} and $h^{(4)}$ is bounded on compact intervals. Therefore, modulo the claim above, one readily obtains the desired result~\eqref{eq:Unif-Lim} from~\eqref{eq:Display} and~\eqref{eq:Unif-2}.

The next two subsections are devoted to understanding $\frac{n\int_0^1 (\fhat(x)-1)^kdx}{\sqrt{\log{n}}}$ for $k=3,4$. In the analysis, we crucially use \citet[Theorem 2.1]{GP83} and consequently need the following notation to proceed. Let $$T_n :=\sum_{j=1}^n jN_j, \quad  S_n:=\sum_{j=1}^n\sum_{i=1}^{N_j} S_{ji},$$ where $\{S_{ji}: i\geq 1, j\geq 0\}$ are independent random variables with $S_{ji}\sim \mathrm{Gamma}(j,1)$, for $j\geq 1$, and $S_{0i}\sim \mathrm{Gamma}(1,1)$, and $N_j\sim \mathrm{Poisson}(1/j)$ for $j\geq 1$ and $N_0=1$ a.s.; further, $\{N_j\}$ and $\{S_{ji}\}$ are independent. Also let, 
	\be 
	W_n=\frac{1}{n}\sum_{j=1}^n jN_j,\quad V_n=n^{-1/2}\sum_{j=1}^n\sum_{i=1}^{N_j}(S_{ji}-j).
	\ee
With this notation we are ready to analyze $\frac{n\int_0^1 (\fhat(x)-1)^kdx}{\sqrt{\log{n}}}$ for $k=3,4$.

\subsubsection{Proof of $\frac{n\int_0^1 (\fhat(x)-1)^3dx}{\sqrt{\log{n}}}=o_{\P}(1)$}
Let us start by considering the distribution of 
 	\be 
 	L_n :=\int_0^1 (\fhat(x)-1)^3dx=\int \fhat^3(x)dx-3\int \fhat^2(x)dx+2.
 	\ee
By an argument similar to that in~\citet[Section 3]{GP83}, it can be seen that $L_n$ has the same distribution as that of
\be 
L_n^{*}& :=\frac{1}{n}\sum_{j=1}^n j^3\sum_{i=1}^{N_j}\frac{1}{S_{ji}^2} 
-\frac{3}{n}\sum_{j=1}^n j^2\sum_{i=1}^{N_j}\frac{1}{S_{ji}}+2 \, \Big| \,\{T_n=n, S_n=n\}.
\ee

For constants $c_0,c_n,\gamma_n$, we define the sequence of random variables 
\be 
U_n(c_0,c_n,\gamma_n)  := & \frac{1}{c_n}\left[\sum_{j=1}^n j^3\sum_{i=1}^{N_j}\left(\frac{1}{S_{ji}^2}-\frac{1}{j^2}\right)-3\sum_{j=1}^n j^2\sum_{i=1}^{N_j}\left(\frac{1}{S_{ji}}-\frac{1}{j}\right) \right.\\
	& +\; \left.c_0\sum_{j=1}^n\sum_{i=1}^{N_j}(S_{ji}-j)-\gamma_n\right].\qquad \qquad 
\ee
{The particular choice of $c_0,c_n,\gamma_n$ is crucial in the analysis that follows. In particular, a specific choice for the constant $c_0$ implies certain fine-tuned cancellations which are necessary for controlling the asymptotic variance of $U_n$ at a desired level. It is worth noting that such a definition and the eventual choice of $c_0=1$ is also present in \citet[Equation (3.2)]{GP83}. Our choice of $c_0,c_n,\gamma_n$ is more tailored to the current problem.}
%Define simultaneously, 
%\be 
%W_n=\frac{1}{n}\sum_{j=1}^n jN_j,\quad V_n=n^{-1/2}\sum_{j=1}^n\sum_{i=1}^{N_j}(S_{ji}-j).
%\ee
Finally, conditional on the joint event that $W_n=1$ and $V_n=0$ (i.e., $T_n=n$ and $S_n=n$) one has 
 \begin{equation}\label{eq:U_n-L_n}
 U_n(c_0,c_n,\gamma_n) \big| \{W_n=1, V_n=0\}\stackrel{d}{=}c_n^{-1}(nL_n^*-\gamma_n)\big|\{T_n=n,S_n=n\}.
 \end{equation}
The following lemma, crucial to our proof, is the analogue of \citet[Lemma 3.1]{GP83}; a correction to the characteristic function formula in \citet[Lemma 3.1]{GP83} was recently pointed out in \cite{piet_manuscript}.
  \begin{lemma}\label{lemma:asymp_joint_distr_uvw_4}
  	Let $c_n=\sqrt{\log{n}}$, $\gamma_n=0$ and $c_0=-1$. Then $(U_n,V_n,W_n)\stackrel{d}{\rightarrow} (U,V,W)$ where $U=\delta_0$, the dirac measure at 0, and $(V,W)$ has the infinitely divisible characteristic function $\phi_{V,W}(s,t)=\exp\left(\int_0^1(e^{itu-s^2u/2}-1)u^{-1}du\right)$.
  \end{lemma}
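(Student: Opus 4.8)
The plan is to follow the structure of \citet[Lemma 3.1]{GP83} while carefully tracking the extra power $j^3/S_{ji}^2$ term that is not present in the quadratic case, and to isolate the role of the constant $c_0$. First I would identify the limiting object: since $N_0 = 1$ a.s. and $S_{0i} \sim \mathrm{Gamma}(1,1)$, the $j=0$ summand contributes a fixed random term, and the bulk of $U_n$ comes from the sum over $1 \le j \le n$. For each fixed $j$, the inner sum over $i \le N_j$ is a compound Poisson sum of i.i.d. mean-zero increments $\zeta_{ji} := j^3(S_{ji}^{-2} - j^{-2}) - 3 j^2(S_{ji}^{-1} - j^{-1}) + c_0(S_{ji} - j)$; one computes $\E[\zeta_{ji}]=0$ directly (using $\E[S_{ji}^{-1}] = 1/(j-1)$ for $j \ge 2$, etc., with the $j=1,2$ cases handled separately since negative moments of low-shape Gamma variables blow up — this is exactly where the log-divergence and hence the $\sqrt{\log n}$ normalization enters). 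The key variance computation is $\sum_{j=1}^n \tfrac{1}{j}\V(\zeta_{1j\text{-type}})$; the claim is that with $c_0 = -1$ the leading behaviour of $\sum_j j^{-1}\E[\zeta_{ji}^2]$ is $o(\log n)$ after the chosen normalization $c_n = \sqrt{\log n}$, so that $\mathrm{Var}(U_n/c_n) \to 0$ and $U_n \stackrel{d}{\to} \delta_0$. I would verify this by expanding $\E[\zeta_{ji}^2]$ in powers of $1/j$ and checking that the would-be $\Theta(1/j)$ coefficient (which would produce a $\log n$ term) cancels precisely when $c_0 = -1$; the surviving terms are $O(1/j^2)$ and summable.

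Next I would establish the joint convergence $(U_n, V_n, W_n) \stackrel{d}{\to} (U,V,W)$. The cleanest route is via characteristic functions: write $\phi_{n}(r,s,t) := \E\exp(i r U_n + i s V_n + i t W_n)$ and use independence across $j$ together with the conditional-on-$N_j$ structure to get a product formula $\phi_n(r,s,t) = \prod_{j=0}^n \E\exp\big(\tfrac{1}{j}(\E_{S}[e^{i(\cdots)}] - 1)\big)$ after summing out the Poisson$(1/j)$ variable $N_j$ (using the compound-Poisson generating identity $\E[z^{N}] \mapsto$ exponential of $(1/j)(\text{something}-1)$). For the $U_n$ coordinate, the $r$-dependence is carried by $\exp(i r \zeta_{ji}/c_n)$; since $c_n \to \infty$ and the relevant moments are controlled by the variance bound above, a Taylor expansion shows the $r$-dependent part of $\log \phi_n$ tends to $0$, giving $U = \delta_0$ and decoupling $U$ from $(V,W)$ in the limit. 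For the $(V_n,W_n)$ coordinates one recovers exactly the computation in \citet[Section 3]{GP83}: $W_n = n^{-1}\sum j N_j$ and $V_n = n^{-1/2}\sum\sum (S_{ji}-j)$, and the product formula converges to $\exp\big(\int_0^1 (e^{itu - s^2 u/2} - 1) u^{-1}\, du\big)$ — this is the infinitely divisible law arising from the Poisson$(1/j)$, $j = \lfloor nu \rfloor$, scaling, with the $e^{-s^2 u/2}$ factor coming from the CLT-type behaviour of the Gamma$(j,1)$ fluctuations $S_{ji}-j$ (variance $j \approx nu$, scaled by $n^{-1/2}$). I would cite \cite{piet_manuscript} for the corrected characteristic-function bookkeeping and import that computation rather than redo it.

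The third ingredient is a uniform-integrability / tightness argument so that convergence of characteristic functions on a neighbourhood of the origin, plus the moment bounds, actually yields convergence in distribution of the triple; this is routine given the variance estimates and the explicit compound-Poisson moments, and I would dispatch it by noting all three coordinates have bounded-in-$n$ second moments (for $U_n$, by the $o(\log n)$ variance bound divided by $\log n$; for $V_n, W_n$ by direct computation). Finally I would remark that the statement is exactly what is needed downstream: conditioning on $\{W_n = 1, V_n = 0\}$ and using the local-limit-theorem machinery of \citet[Section 3]{GP83} (justified because $(V_n,W_n)$ has a nondegenerate absolutely continuous limit near $(0,1)$ and $U_n$ is asymptotically independent of it), the conditional law of $c_n^{-1}(n L_n^* - \gamma_n) = c_n^{-1} n L_n^*$ converges to the (unconditional) law of $U = \delta_0$, i.e. $n \int_0^1(\fhat(x)-1)^3\,dx / \sqrt{\log n} = o_{\P}(1)$.

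The main obstacle, I expect, is the variance cancellation that forces $c_0 = -1$: one must expand $\E[\zeta_{ji}^2]$ — which involves $\E[S_{ji}^{-4}], \E[S_{ji}^{-3}], \E[S_{ji}^{-2}], \E[S_{ji}^{-1}]$ and cross terms with $\E[S_{ji}], \E[S_{ji}^2]$ for $S_{ji} \sim \mathrm{Gamma}(j,1)$ — to second order in $1/j$ and check that the coefficient of $1/j$ in $\sum_j j^{-1}\E[\zeta_{ji}^2]$ vanishes for the right $c_0$ while everything else is $O(\sum_j j^{-3}) = O(1)$; the small-$j$ terms ($j=1,2,3$, where several negative moments are infinite or large) must be peeled off and shown to contribute $O(1)$, hence $o(\log n)$, on their own. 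A secondary technical point is justifying the passage from "characteristic function converges near $0$ and moments are bounded" to the conditional limit law — i.e. making the local limit theorem argument of \cite{GP83} rigorous in the presence of the extra cubic term — but since the limiting $(V,W)$ law and its local behaviour are unchanged from \cite{GP83}, this should go through essentially verbatim.
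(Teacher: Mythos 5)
Your overall plan — split $U_n$ into a compound-Poisson-over-$j$ structure, compute a conditional mean and variance, show the variance is $o(\log n)$ at $c_0=-1$, and then invoke the GP83 characteristic-function machinery for the $(V_n,W_n)$ block — is structurally the same as the paper's. But two of your key numerical diagnoses are wrong, and they are not cosmetic: they mis-locate where the cancellation forced by $c_0=-1$ actually lives.

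First, $\E[\zeta_{ji}]\ne 0$. With $\zeta_{ji}=j^3(S_{ji}^{-2}-j^{-2})-3j^2(S_{ji}^{-1}-j^{-1})+c_0(S_{ji}-j)$ and $\E[S_{ji}^{-1}]=1/(j-1)$, $\E[S_{ji}^{-2}]=1/\{(j-1)(j-2)\}$, one gets $\E[\zeta_{ji}]=\frac{4j}{(j-1)(j-2)}$, which is $O(1/j)$ and, in particular, does not depend on $c_0$ at all (since $\E[S_{ji}-j]=0$). The mean still causes no trouble — $\sum_j j^{-1}\E[\zeta_{ji}]$ converges and dividing by $c_n\to\infty$ kills it — but the reason is the boundedness of that sum, not that each $\zeta_{ji}$ is centered. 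Second, and more seriously, the cancellation you would ``check'' is at the wrong order. Expanding in powers of $1/j$, the $c_0$-dependent part of $\E[\zeta_{ji}^2]$ is $(c_0+1)^2\,j+O(1)$: the $c_0^2\V(S_{ji})=c_0^2 j$ term, the $2c_0 j^3\mathrm{Cov}(S^{-2},S)\sim -4c_0 j$ term, and the $-6c_0 j^2\mathrm{Cov}(S^{-1},S)\sim 6c_0 j$ term combine with the $c_0$-free pieces to give the perfect square in front of $j$. Without $c_0=-1$ you therefore have $\E[\zeta_{ji}^2]=\Theta(j)$ and $\V(c_n U_n)=\sum_j j^{-1}\E[\zeta_{ji}^2]=\Theta(n)$, not a $\log n$ divergence. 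Your claim that the problematic coefficient is $\Theta(1/j)$ is off by two orders: a $\Theta(1/j)$ piece of $\E[\zeta_{ji}^2]$ would sum, with the extra $j^{-1}$ Poisson weight, to $O(\sum j^{-2})=O(1)$, which is already harmless. The choice $c_0=-1$ in fact annihilates both the $\Theta(j)$ and the $\Theta(1)$ coefficients (both are proportional to $(c_0+1)$), leaving $\E[\zeta_{ji}^2]=\Theta(1/j)$ and hence $\V(c_n U_n^*)=O(1)$, so $\V(U_n^*)=O(1/\log n)\to 0$. The $\sqrt{\log n}$ normalization is not driven by the small-$j$ blow-up of negative Gamma moments as you suggest; the paper simply truncates the first few $j$ (where those moments are infinite or large) into a negligible remainder and works with $U_n^*$, exactly as your plan allows for. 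If you carry out the $1/j$-expansion as you propose you will run into these discrepancies immediately, so the fix is mechanical, but as written the proposal does not identify the correct cancellation.

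One smaller point: the local-limit-theorem step you sketch at the end (conditioning on $\{W_n=1,V_n=0\}$) is not part of this lemma's content; the lemma only asserts the unconditional joint weak limit of $(U_n,V_n,W_n)$, and the conditioning is applied afterward in the main proof. Also, $U_n,V_n,W_n$ sum over $j\ge 1$, so the $j=0$ contribution you mention does not actually appear in them.
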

  \begin{proof}
  
  	With the choice of $c_0,c_n,\gamma_n$ provided in the statement of the theorem, denote $U_n \equiv U_n(c_0,c_n,\gamma_n)$. We only prove the fact that $U_n \stackrel{\P}{\rightarrow} 0$. The subsequent conclusion follows along similar lines as in the proof of \citet[Lemma 3.1]{GP83}. In the following analysis we repeatedly use the fact that $\E\left(S_{ji}^{-k}\right)=\frac{\Gamma(j-k)}{\Gamma(j)}$ for $j>k$. 
  	
  	First, define $U_n^*$ to be $U_n$ without its first $3$ summands in $j$, i.e., 
{\small $$ U_n^* := \frac{1}{c_n}\left[\sum_{j=4}^n j^3\sum_{i=1}^{N_j}\left(\frac{1}{S_{ji}^2}-\frac{1}{j^2}\right)-3\sum_{j=4}^n j^2\sum_{i=1}^{N_j}\left(\frac{1}{S_{ji}}-\frac{1}{j}\right)-\sum_{j=4}^n\sum_{i=1}^{N_j}(S_{ji}-j) \right].$$}
We will show that $U_n^*\stackrel{\P}{\rightarrow} 0$ and as $U_n-U_n^*\stackrel{\P}{\rightarrow} 0$ (since $c_n\rightarrow \infty$), this will establish the desired result. Note that, 
  \be 
 c_n\E\left(U_n^*\right)
  &=\sum_{j=4}^n j^3\E(N_j)\left[\frac{\Gamma(j-2)}{\Gamma(j)}-\frac{1}{j^2}\right]-3\sum_{j=4}^n j^2\E(N_j)\left[\frac{\Gamma(j-1)}{\Gamma(j)}-\frac{1}{j}\right]\\
  &=\sum_{j=4}^n \frac{4}{(j-1)(j-2)}.
  \ee
As $\limsup_{n\rightarrow \infty}|\sum_{j=4}^n \frac{4}{(j-1)(j-2)}|<\infty$, one has $\E(U_n^*)\rightarrow 0$ (as $c_n\rightarrow \infty$). Subsequently it is enough to prove that $\mathrm{Var}(U_n^*)\rightarrow 0$ which in turn is implied by proving that $\limsup_{n\rightarrow \infty}\mathrm{Var}(c_n U_n^*) <\infty$. First note that 
  \be 
  \mathrm{Var}(\E(c_n U_n^*|N_1,\ldots,N_n))&=\sum_{j=4}^n \left(\frac{4j}{(j-1)(j-2)}\right)^2\mathrm{Var}(N_j)\\
  &=\sum_{j=4}^n \left(\frac{4j}{(j-1)(j-2)}\right)^2\frac{1}{j},
  \ee
  and consequently, $\limsup_{n\rightarrow \infty}\mathrm{Var}(\E(c_n U_n^*|N_1,\ldots,N_n)) <\infty$. Next, note that 
  \begin{eqnarray*}
\mathrm{Var}(c_nU_n^*|N_1,\ldots,N_n) 
  &=& \sum_{j=4}^n N_j\left[ j^6\mathrm{Var}(\frac{1}{S_{j1}^2})+9j^4\mathrm{Var}(\frac{1}{S_{j1}})+\mathrm{Var}(S_{j1}) \right. \\
  && \left. -6j^5\mathrm{Cov}(\frac{1}{S_{j1}^2},\frac{1}{S_{j1}})-2j^3\mathrm{Cov}(\frac{1}{S_{j1}^2},S_{j1})+6\mathrm{Cov}(\frac{1}{S_{j1}},S_{j1})\right]\\
  &=& \sum_{j=4}^n N_j\frac{15 j^5 + 129 j^4 - 404 j^3 + 116 j^2 + 48 j}{(j-1)^2(j-2)^2(j-3)(j-4)}.
  \end{eqnarray*}
  Consequently, 
  \be 
  \ & \limsup_{n\rightarrow \infty}\E(\mathrm{Var}(c_nU_n^*|N_1,\ldots,N_n))\\
  &=\limsup_{n\rightarrow \infty}\sum_{j=4}^n \frac{15 j^4 + 129 j^3 - 404 j^2 + 116 j + 48 }{(j-1)^2(j-2)^2(j-3)(j-4)}<\infty.
  \ee
  This completes the proof of the lemma. %\qed 
\end{proof}
 
  We finally note that Lemma \ref{lemma:asymp_joint_distr_uvw_4} immediately implies that  $U_n$ is asymptotically independent of $(W_n,V_n)$ and that $U_n-U_m \stackrel{\P}{\rightarrow} 0$ for any $n,m\rightarrow \infty$. Consequently, the analysis provided in the proof of  \citet[Theorem 3.1]{GP83} goes through verbatim yielding that $$U_n|\{W_n=1, V_n=0\}\stackrel{d}{\rightarrow}0$$ which in turn proves that (using~\eqref{eq:U_n-L_n}) $c_n^{-1}(nL_n^*-\gamma_n)\big| \{T_n=n,S_n=n\} \stackrel{d}{\rightarrow}0$ for our choice of $\gamma_n=0$. 
 This yields the desired result.
 
 \subsubsection{Proof of $\frac{n\int_0^1 (\fhat(x)-1)^4 dx}{\sqrt{\log{n}}}=o_{\P}(1)$}

The proof technique is similar to that of the case $\frac{\int_0^1 (\fhat(x)-1)^3 dx}{\sqrt{\log{n}}}$ albeit with much more cumbersome details and a different choice of $c_0$. As before, let us start by considering the distribution of 
 	\be 
 	L_n& :=\int_0^1 (\fhat(x)-1)^4dx=\int \fhat^4(x)dx-4\int \fhat^3(x)dx+6\int \fhat^2(x)dx-3.
 	\ee
As argued in \citet[Section 3]{GP83}, it is equivalent to studying the (conditional) distribution of 
\be 
L_n^*:=\frac{1}{n}\sum_{j=1}^n j^4\sum_{i=1}^{N_j}\frac{1}{S_{ji}^3} 
-\frac{4}{n}\sum_{j=1}^n j^3\sum_{i=1}^{N_j}\frac{1}{S_{ji}^2}+\frac{6}{n}\sum_{j=1}^n j^2\sum_{i=1}^{N_j}\frac{1}{S_{ji}}-3 \Big| \, \{ T_n=n, S_n=n\}.
\ee
We once again introduce a sequence of random variables 
\be 
U_n(c_0,c_n,\gamma_n) &:= \frac{1}{c_n}\left[ \sum_{j=1}^n j^4\sum_{i=1}^{N_j}\left(\frac{1}{S_{ji}^3}-\frac{1}{j^3}\right)-4\sum_{j=1}^n j^3\sum_{i=1}^{N_j}\left(\frac{1}{S_{ji}^2}-\frac{1}{j^2}\right)\right.\\
 & + \left.6\sum_{j=1}^n j^2\sum_{i=1}^{N_j}\left(\frac{1}{S_{ji}}-\frac{1}{j}\right) 
	+c_0\sum_{j=1}^n\sum_{i=1}^{N_j}(S_{ji}-j)-\gamma_n\right].
\ee
Then, conditional on the joint event that $W_n=1$ and $V_n=0$ (i.e., $T_n=n$ and $S_n=n$) one has  
 \be 
 U_n(c_0,c_n,\gamma_n) \big| \{W_n=1, V_n=0\}\stackrel{d}{=}c_n^{-1}(nL_n^*-\gamma_n)|\{T_n=n,S_n=n\}.
 \ee
 
Once again we have a crucial lemma which is the analogue of \citet[Lemma 3.1]{GP83}; a correction to the characteristic function formula in \citet[Lemma 3.1]{GP83} was recently pointed out in \cite{piet_manuscript}. 
 
 \begin{lemma}
 \label{lemma:asymp_joint_distr_uvw_5}
 	Let $c_n=\sqrt{\log{n}}$, $\gamma_n=0$ and $c_0=1$. Then $(U_n,V_n,W_n)\stackrel{d}{\rightarrow} (U,V,W)$ where $U=\delta_0$ and $(V,W)$ has the infinitely divisible characteristic function $\phi_{V,W}(s,t)=\exp\left(\int_0^1(e^{itu-s^2u/2}-1)u^{-1}du\right)$.
 \end{lemma}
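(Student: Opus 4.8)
To prove Lemma~\ref{lemma:asymp_joint_distr_uvw_5} I would follow the template of the proof of Lemma~\ref{lemma:asymp_joint_distr_uvw_4}. As there, the only substantive point is to show that $U_n\stackrel{\P}{\to}0$; granting this, the joint convergence $(U_n,V_n,W_n)\stackrel{d}{\to}(U,V,W)$ with $U=\delta_0$ and $(V,W)$ carrying the stated infinitely divisible characteristic function --- in particular the limit of $(V_n,W_n)$ and the asymptotic independence of $U_n$ from $(V_n,W_n)$ --- follows along exactly the same lines as in \citet[Lemma 3.1]{GP83} (with the characteristic-function correction of \cite{piet_manuscript}), since the $V_n,W_n$ coordinates do not involve the inverse-Gamma summands that distinguish $U_n$.

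The first step is an algebraic identity that pins down the choice $c_0=1$: for every integer $j\ge1$ and every $S>0$,
\begin{equation*}
j^4\Big(\frac{1}{S^3}-\frac{1}{j^3}\Big)-4j^3\Big(\frac{1}{S^2}-\frac{1}{j^2}\Big)+6j^2\Big(\frac{1}{S}-\frac{1}{j}\Big)+c_0(S-j)=\frac{(S-j)^4}{S^3}-(1-c_0)(S-j),
\end{equation*}
which is checked by clearing denominators. Hence $U_n=c_n^{-1}\sum_{j=1}^n\sum_{i=1}^{N_j}\big[(S_{ji}-j)^4 S_{ji}^{-3}-(1-c_0)(S_{ji}-j)\big]$, and only $c_0=1$ deletes the second summand, whose contribution to $\mathrm{Var}(U_n)$ would otherwise be of order $c_n^{-2}\sum_j\E[N_j]\,\mathrm{Var}(S_{j1})=c_n^{-2}n\to\infty$; this is the ``fine-tuned cancellation'' alluded to before the lemma, and is the $k=4$ analogue of the cancellation exploited (in a more hands-on form) in the proof of Lemma~\ref{lemma:asymp_joint_distr_uvw_4}, where $c_0=-1$ and the summand reduces to $(S_{ji}-j)^3 S_{ji}^{-2}$. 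With $c_0=1$ we thus obtain the nonnegative closed form $U_n=c_n^{-1}\sum_{j=1}^n\sum_{i=1}^{N_j}(S_{ji}-j)^4 S_{ji}^{-3}$.

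Next I would split off the small indices. Let $U_n^{\ast}$ be $U_n$ with the outer sum restricted to $j\ge7$, the range on which all the Gamma moments entering the analysis --- up to $\E[S_{j1}^{-6}]=\Gamma(j-6)/\Gamma(j)$ --- are finite. Then $U_n-U_n^{\ast}=c_n^{-1}\sum_{j=1}^{6}\sum_{i=1}^{N_j}(S_{ji}-j)^4 S_{ji}^{-3}$ is $c_n^{-1}$ times a single almost surely finite random variable (independent of $n$ for $n\ge6$), hence $\to0$ in probability because $c_n=\sqrt{\log n}\to\infty$. For $U_n^{\ast}$, the $\{N_j\}$ and $\{S_{ji}\}$ are independent and $N_j$ is Poisson with $\E N_j=\mathrm{Var}(N_j)=1/j$, so the compound-Poisson identity $\mathrm{Var}\big(\sum_{i\le N_j}X_{ji}\big)=\E[N_j]\,\E[X_{j1}^2]$ gives
\begin{equation*}
\E[U_n^{\ast}]=\frac{1}{c_n}\sum_{j=7}^n\frac1j\,\E\big[(S_{j1}-j)^4 S_{j1}^{-3}\big],\qquad \mathrm{Var}(U_n^{\ast})=\frac{1}{c_n^2}\sum_{j=7}^n\frac1j\,\E\big[(S_{j1}-j)^8 S_{j1}^{-6}\big].
\end{equation*}
Expanding the powers of $S_{j1}-j$ and using $\E[S_{j1}^{-m}]=\Gamma(j-m)/\Gamma(j)$, a direct computation gives $\E[(S_{j1}-j)^4 S_{j1}^{-3}]=\frac{3j(j+6)}{(j-1)(j-2)(j-3)}=O(j^{-1})$ and, after the top-order terms cancel, $\E[(S_{j1}-j)^8 S_{j1}^{-6}]=O(j^{-2})$. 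Therefore $\E[U_n^{\ast}]=O\big(c_n^{-1}\sum_j j^{-2}\big)\to0$ and $\mathrm{Var}(U_n^{\ast})=O\big(c_n^{-2}\sum_j j^{-3}\big)\to0$, so $U_n^{\ast}\to0$ in $L^2$ and thus $U_n\stackrel{\P}{\to}0$.

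The one genuinely laborious piece is the exact evaluation of $\E[(S_{j1}-j)^8 S_{j1}^{-6}]$ as a rational function of $j$ and the verification that its leading coefficients cancel so that it is $O(j^{-2})$ rather than $O(1)$; the Gaussian heuristic $S_{j1}\approx j+\sqrt j\,Z$, which gives $\E[(S_{j1}-j)^8 S_{j1}^{-6}]\approx j^{-2}\,\E Z^8$, predicts both the order and the leading constant and is a convenient sanity check. This bookkeeping is heavier than in the $k=3$ case but entirely parallel; the only delicate point is to carry out all moment manipulations solely for $j\ge7$, where the relevant negative moments are finite, and to absorb the finitely many smaller indices into $U_n-U_n^{\ast}$ as above. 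Once $U_n\stackrel{\P}{\to}0$ is established, the rest of the statement follows exactly as in \citet[Lemma 3.1]{GP83}.
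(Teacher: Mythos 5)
Your proposal is correct and follows essentially the same route as the paper's proof: reduce to showing $U_n\stackrel{\P}{\to}0$ via first and second moments of a truncated $U_n^{\ast}$, then invoke \citet[Lemma 3.1, Theorem 3.1]{GP83} for the rest. Where you genuinely improve on the paper's exposition is the algebraic identity $\;j^4(S^{-3}-j^{-3})-4j^3(S^{-2}-j^{-2})+6j^2(S^{-1}-j^{-1})+(S-j)=(S-j)^4S^{-3}$, which I have checked; the paper never writes this closed form and instead expands $\mathrm{Var}(c_nU_n^{\ast}\mid N)$ term by term into six variances and covariances of powers $S_{j1}^{-m}$. Your single compound--Poisson formula $\E[U_n^{\ast}]=c_n^{-1}\sum_j j^{-1}\E[(S_{j1}-j)^4S_{j1}^{-3}]$, $\mathrm{Var}(U_n^{\ast})=c_n^{-2}\sum_j j^{-1}\E[(S_{j1}-j)^8S_{j1}^{-6}]$ is mathematically equivalent to the paper's conditional variance decomposition but conceptually cleaner, and your explicit evaluation $\E[(S_{j1}-j)^4S_{j1}^{-3}]=3j(j+6)/\{(j-1)(j-2)(j-3)\}$ is correct. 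You are also right to start the truncated sum at $j\ge7$: the paper writes $U_n^{\ast}=\sum_{j=5}^n(\cdots)$ but its final display contains $(j-5)(j-6)$ in the denominator, so the paper's indexing is an off-by-two slip that your version silently fixes. Two minor quibbles: for the $k=3$ case with $c_0=-1$ the summand is $-(S_{ji}-j)^3S_{ji}^{-2}$, not $(S_{ji}-j)^3S_{ji}^{-2}$ (sign only, no consequence); and your remark ``$O(j^{-2})$ rather than $O(1)$'' understates the cancellation, since the individual terms in the expansion of $\E[(S_{j1}-j)^8S_{j1}^{-6}]$ are each of order $j^{2}$. These do not affect the correctness of the argument.
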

 \begin{proof}
With the above choice of $c_0,c_n,\gamma_n$, denote by $U_n \equiv U_n(c_0,c_n,\gamma_n)$. We only prove the fact that $U_n \stackrel{\P}{\rightarrow} 0$. The subsequent conclusion follows along similar lines as in the proof of \citet[Lemma 3.1]{GP83} and Lemma~\ref{lemma:asymp_joint_distr_uvw_4}. %In the following analysis we repeatedly use the fact that $\E\left(S_{ji}^{-k}\right)=\frac{\Gamma(j-k)}{\Gamma(j)}$ for $j>k$. 
 	
 	First, define $U_n^*$ to be $U_n$ without it's first $4$ summands in $j$, i.e.,
 \be 
 U_n^* & =  c_n^{-1}\left[  \sum_{j=5}^n j^4\sum_{i=1}^{N_j}\left(\frac{1}{S_{ji}^3}-\frac{1}{j^3}\right)-4\sum_{j=5}^n j^3\sum_{i=1}^{N_j}\left(\frac{1}{S_{ji}^2}-\frac{1}{j^2}\right) \right.\\
 & \quad \left. +6\sum_{j=5}^n j^2\sum_{i=1}^{N_j}\left(\frac{1}{S_{ji}}-\frac{1}{j}\right) 
 	+\sum_{j=5}^n\sum_{i=1}^{N_j}(S_{ji}-j)\right].
 \ee
We show that $U_n^*\stackrel{\P}{\rightarrow} 0$ and indeed by $U_n-U_n^*\stackrel{\P}{\rightarrow} 0$ (since $c_n\rightarrow \infty$) we establish the desired claim. Note that, 
 \be 
c_n\E\left(U_n^*\right)
 &=\sum_{j=5}^n j^4\E(N_j)\left[\frac{\Gamma(j-3)}{\Gamma(j)}-\frac{1}{j^3}\right]-4\sum_{j=5}^n j^3\E(N_j)\left[\frac{\Gamma(j-2)}{\Gamma(j)}-\frac{1}{j^2}\right]\\
 &+6\sum_{j=5}^n j^2\E(N_j)\left[\frac{\Gamma(j-1)}{\Gamma(j)}-\frac{1}{j}\right]\\
 &=\sum_{j=5}^n \frac{2(j-9)}{(j-1)(j-2)(j-3)}.
 \ee
 Now $\limsup_{n\rightarrow \infty}|\sum_{j=5}^n \frac{2(j-9)}{(j-1)(j-2)(j-3)}|<\infty$ by standard ratio test. Consequently, using $c_n\rightarrow \infty$ one has $\E(U_n^*)\rightarrow 0$.
 	
 Subsequently it is enough to prove that $\mathrm{Var}(U_n^*)\rightarrow 0$ which in turn is implied by proving that $\limsup_{n\rightarrow \infty}\mathrm{Var}(c_n U_n^*) <\infty$. First note that 
 \be 
 \mathrm{Var}(\E(c_n U_n^*|N_1,\ldots,N_n))&=\sum_{j=5}^n \left(\frac{2j(j-9)}{(j-1)(j-2)(j-3)}\right)^2\mathrm{Var}(N_j)\\
 &=\sum_{j=5}^n \left(\frac{2j(j-9)}{(j-1)(j-2)(j-3)}\right)^2\frac{1}{j},
 \ee
 and consequently, $\limsup_{n\rightarrow \infty}\mathrm{Var}(\E(c_n U_n^*|N_1,\ldots,N_n)) <\infty$. Next note that 
 \be 
 \ & \mathrm{Var}(c_nU_n^*|N_1,\ldots,N_n)\\
 &=\sum_{j=5}^n N_j\left[
 j^8\mathrm{Var}(\frac{1}{S_{j1}^3})+16j^6\mathrm{Var}(\frac{1}{S_{j1}^2})+36j^4\mathrm{Var}(\frac{1}{S_{j1}})+\mathrm{Var}(S_{j1}) \right.\\
 & \qquad \qquad -8j^7\mathrm{Cov}(\frac{1}{S_{j1}^3},\frac{1}{S_{j1}^2})+12j^6\mathrm{Cov}(\frac{1}{S_{j1}^3},\frac{1}{S_{j1}})+2j^4\mathrm{Cov}(\frac{1}{S_{j1}^3},S_{j1})\\
&  \qquad \qquad -\left.48j^5\mathrm{Cov}(\frac{1}{S_{j1}^2},\frac{1}{S_{j1}})-8j^3\mathrm{Cov}(\frac{1}{S_{j1}^2},S_{j1})+12j^2\mathrm{Cov}(\frac{1}{S_{j1}},S_{j1})\right]\\
 &=\sum_{j=5}^n N_j\frac{96 j^7 + 4297 j^6 - 14259 j^5 - 28198 j^4 + 102180 j^3 - 33336 j^2 - 4320 j
 }{(j-1)^2(j-2)^2(j-3)^2(j-4)(j-5)(j-6)}.
 \ee
 Consequently, 
{\small  \be 
 \ & \limsup_{n\rightarrow \infty}\E(\mathrm{Var}(c_nU_n^*|N_1,\ldots,N_n))\\
 &=\limsup_{n\rightarrow \infty}\sum_{j=5}^n \frac{96 j^6 + 4297 j^5 - 14259 j^4 - 28198 j^3 + 102180 j^2 - 33336 j - 4320 
  }{(j-1)^2(j-2)^2(j-3)^2(j-4)(j-5)(j-6)} <\infty.
 \ee}
 This completes the proof of the lemma. 
 \end{proof}
 
 We finally note that as before Lemma \ref{lemma:asymp_joint_distr_uvw_4} immediately implies that in the asymptotic distributional limit, $U_n$ is independent of $W_n,V_n$ and that $U_n-U_m\stackrel{\P}{\rightarrow} 0$ for any $n,m\rightarrow \infty$. Consequently, the analysis provided in proving \citet[Theorem 3.1]{GP83} goes through verbatim yielding that $U_n|\{W_n=1, V_n=0\}\stackrel{d}{\rightarrow}0$ which in turn proves that $$c_n^{-1}(nL_n^*-\gamma_n)|\{T_n=n,S_n=n\}\stackrel{d}{\rightarrow}0$$ for our choice of $\gamma_n=0$. Consequently, $\frac{\int_0^1 (\fhat(x)-1)^4 dx}{\sqrt{\log{n}}}\stackrel{\P}{\rightarrow}0.$

\section*{Acknowledgements}
The authors would like to thank Jon Wellner for many insightful and helpful comments that led to major improvements in some of the results in this manuscript. The authors would also like to thank Piet Groeneboom for helpful conversations and for sharing his working manuscript.

\bibliographystyle{chicago}
\bibliography{AG}

\end{document}